\documentclass[11pt]{amsart}

\usepackage{amsmath,amssymb,latexsym,soul,cite}
\usepackage{xcolor,enumitem,graphicx}
\usepackage[colorlinks=true,urlcolor=blue,
citecolor=red,linkcolor=blue,linktocpage,pdfpagelabels,
bookmarksnumbered,bookmarksopen]{hyperref}
\usepackage[english]{babel}
\usepackage[T1]{fontenc}
\usepackage[utf8]{inputenc}
\usepackage[hyperpageref]{backref}

\allowdisplaybreaks
\usepackage{mathtools}
\mathtoolsset{showonlyrefs}

\usepackage[left=2.8cm,right=2.8cm,top=2.9cm,bottom=2.9cm]{geometry}
\numberwithin{equation}{section}

\newtheorem{thm}{Theorem}[section]

  \theoremstyle{plain}
  \newtheorem{lem}[thm]{Lemma}
  \theoremstyle{plain}
  \newtheorem{prop}[thm]{Proposition}
  \theoremstyle{plain}
  \newtheorem{cor}[thm]{Corollary}
  \theoremstyle{plain}
  \newtheorem{definition}[thm]{Definition}
    \theoremstyle{definition}
\newtheorem{rem}[thm]{Remark}

\newcommand{\R}{{\mathbb R}}
\newcommand{\N}{{\mathbb N}}

\title[Symmetry of fractional Neumann eigenfunctions in the ball]{Symmetry of fractional Neumann eigenfunctions\\ in the ball}

\author[V.~Bobkov]{Vladimir Bobkov}
\author[E.~Parini]{Enea Parini}

\address[V.~Bobkov]{Institute of Mathematics, Ufa Federal Research Centre, RAS, 
Chernyshevsky str. 112, 450008 Ufa, Russia} \email{bobkov@matem.anrb.ru}
\address[E.~Parini]{Aix Marseille Univ, CNRS, I2M, 3 place Victor Hugo, 13331 Marseille CEDEX 3, France}
\email{enea.parini@univ-amu.fr}

\subjclass[2010]
{
35P15,  
35R11,  
35B06,  
47A75.  
}

\keywords{Fractional Laplacian; nonlocal Neumann conditions; eigenfunctions; symmetry; stability}

\thanks{E.P.~acknowledges partial support from the project ``ANR STOIQUES - Shape and Topology Optimization: Impactful QUestions and Emerging Subjects'' (ANR-24-CE40-2216) by the French National Research Agency.}

\begin{document}

\begin{abstract}
	We investigate symmetry properties of the first nontrivial eigenfunctions of the fractional Laplacian $(-\Delta)^s$, where $s \in (0,1)$, in an $N$-dimensional ball with nonlocal Neumann boundary conditions.
	By means of a spectral stability result, we prove that, when $s$ is sufficiently close to $1$, the eigenspace associated to the first nontrivial eigenvalue is generated by $N$ antisymmetric eigenfunctions with exactly two nodal domains in the ball. 
\end{abstract}

\maketitle

\section{Introduction}\label{sec:intro}

Let $s \in (0,1)$ and let $\Omega \subset \R^N$ be a bounded Lipschitz domain, $N \geq 2$. 
We consider the eigenvalue problem
\begin{equation}\label{eq:eigenvalueproblem0} 
	\left\{ \begin{array}{r c l l} (-\Delta)^s u & = & \mu \, u & \text{in }\Omega, \\ \mathcal N_s u & = & 0 & \text{in }\R^N \setminus \overline{\Omega},\end{array} \right.
\end{equation}
where $(-\Delta)^s$ is the fractional Laplace operator defined pointwise, for sufficiently smooth functions, as
\begin{equation}\label{eq:frac-lap}
(-\Delta)^s u(x) = 
c_{N,s}
\lim_{\varepsilon \to 0^+} \int_{\mathbb{R}^N \setminus B_\varepsilon(x)} \frac{u(x)-u(y)}{|x-y|^{N+2s}} \, dy,
\end{equation}
where $c_{N,s}$ is an appropriate normalization constant (see, e.g., \cite[Section~3]{dinezzapalatuccivaldinoci}), and where
\begin{equation}\label{eq:N} 
(\mathcal N_s u)(x) 
:= 
c_{N,s}
\int_\Omega \frac{u(x)-u(y)}{|x-y|^{N+2s}}\,dy, 
\quad
x \in \Omega^c := \R^N \setminus \overline{\Omega}.
\end{equation}

The relation $\mathcal N_s u=0$ in $\Omega^c$ is a nonlocal counterpart of the classical Neumann boundary conditions. 
This concept was first introduced in \cite{dipierrorosotonvaldinoci}, where its probabilistic interpretation was also provided. One of the advantages of this notion is the validity of nonlocal analogs of the usual integration by parts formulae, such as
\[ 
\int_{\Omega} (-\Delta)^s u \,dx = - \int_{\Omega^c} \mathcal{N}_s u \,dx,
\]
for sufficiently smooth functions, which generalizes the well-known relation
\[ \int_{\Omega} \Delta u \,dx = \int_{\partial \Omega} \frac{\partial u}{\partial \nu} \,dS,\]
where $\frac{\partial u}{\partial \nu} = \nabla u \cdot \nu$ is the normal derivative of $u$, and $\nu$ is the outward unit normal vector. 

The validity of the integration by parts formulae makes the setting particularly suitable for the application of variational methods. The authors of \cite{dipierrorosotonvaldinoci} introduced a Hilbert space $H^s_{\Omega,0}$ consisting of measurable functions $u:\mathbb{R}^N \to \mathbb{R}$ and endowed with the norm
\[ 
u \mapsto 
\left( 
\int_\Omega u^2  \,dx
\right)^{\frac{1}{2}}
+
\left( 
\iint_{\R^{2N}\setminus (\Omega^c)^2}
 \frac{|u(x)-u(y)|^2}{|x-y|^{N+2s}}\,dx\,dy
\right)^{\frac{1}{2}},
\]
and showed that the operator $(-\Delta)^s$ defined on $H^s_{\Omega,0}$ satisfies the assumptions of the spectral theorem. In particular, all eigenvalues of \eqref{eq:eigenvalueproblem0} form a discrete monotone sequence $\{\mu_n^{(s)}\}_{n \in \mathbb{N}}$ diverging to infinity:
$$
0 = \mu_0^{(s)} < \mu_1^{(s)} \leq \dots \leq \mu_n^{(s)} \to +\infty, \quad n \to +\infty.
$$ 
When necessary, we will stress the dependence on the domain by using the expanded notation $\mu_n^{(s)}(\Omega)$.
We note, for clarity, that we follow the notation  $\mathbb{N} := \{0,1,2,\dots\}$ and $\mathbb{N}^* := \{1,2,3,\dots\}$.
 
The first nontrivial eigenvalue $\mu_1^{(s)}$ 
has the following variational characterization:
\begin{equation}\label{eq:mu1}
\mu_1^{(s)} = 
\inf 
\left\{
\frac{c_{N,s}}{2}
\iint_{\R^{2N} \setminus (\Omega^c)^2} \frac{|u(x)-u(y)|^2}{|x-y|^{N+2s}}\,dx\,dy \,\bigg|\, u \in H^s_{\Omega,0}, ~\int_\Omega u^2 \,dx = 1,~ \int_\Omega u \,dx = 0\right\}, 
\end{equation}
and any corresponding minimizer $u$ is a weak solution of the problem \eqref{eq:eigenvalueproblem0} with $\mu=\mu_1^{(s)}$, that is,
\begin{equation}\label{eq:weak}
	\frac{c_{N,s}}{2}
\iint_{\R^{2N} \setminus (\Omega^c)^2} \frac{(u(x)-u(y))(v(x)-v(y))}{|x-y|^{N+2s}}\,dx\,dy	= \mu_1^{(s)} \int_\Omega u v \,dx 
	\quad \text{for any}~ v \in H^s_{\Omega,0}.
\end{equation}
Hereinafter, we will formally refer to the case $s=1$ as to the \textit{local} Neumann Laplacian in $\Omega$ and denote by $\{\mu_n^{(1)}\}_{n \in \N}$ the corresponding spectrum. 
The exact connection between $\{\mu_n^{(s)}\}_{n\in \N}$ and $\{\mu_n^{(1)}\}_{n \in \N}$ will be established in Section~\ref{sec:convergence}.

\medskip
The aim of the present work is to investigate symmetry properties of the first nontrivial eigenfunctions of the fractional Laplacian under the nonlocal Neumann boundary conditions. In particular, we are interested in the following question:
\begin{enumerate}
\item[$(\mathcal{Q})$]\label{question:1} 
Let $\Omega$ be a ball $B$. 
Is any first nontrivial eigenfunction antisymmetric with respect to a hyperplane passing through the center of $B$?
\end{enumerate}

For the \textit{local} Neumann Laplacian, the question is well known to have an affirmative answer. 
One possible proof is to note that all corresponding eigenvalues are precisely nonnegative roots of the function \[\Theta_{l}(r) := \frac{d}{d r} \left(r^{\frac{2-N}{2}}J_{\frac{1}{2}(2l+N-2)}(r)\right),\] where $J_\nu$ is the $\nu$-th Bessel function of the first kind, $l \in \mathbb{N}$, and that there is a basis of eigenfunctions with separable geometry.
Then it is sufficient to compare the first {positive} zero of $\Theta_{0}$ (which is the first positive radial eigenvalue) and the first {positive} zero of $\Theta_{1}$ (which is the first positive antisymmetric eigenvalue), and show that the former is strictly greater than the latter, see \cite[Lemma~2.6]{helffersundqvist}. 
However, in the nonlocal setting,  the lack of closed form solutions of the problem \eqref{eq:eigenvalueproblem0} makes it difficult to apply the same arguments. 

Another way to answer the question $(\mathcal{Q})$ in the local case is the following: if we suppose that some first nontrivial eigenfunction $u$ is radial, then we have $\mu_1^{(1)}(B)=\lambda_1^{(1)}(B_r)$, where $\lambda_1^{(1)}(B_r)$ is the first eigenvalue of the local Dirichlet Laplacian in a ball $B_r \subset B$, which is a nodal domain of $u$.
On the other hand, by \cite{Fr} and the domain monotonicity, respectively, we have
\begin{equation}\label{eq:friedland}
\mu_1^{(1)}(B) < \lambda_1^{(1)}(B) < \lambda_1^{(1)}(B_r), 
\end{equation}
which gives a contradiction. 
In the nonlocal setting, in order to go through the same path, it would be sufficient to have the inequalities $\lambda_{1}^{(s)}(B_r) \leq \mu_1^{(s)}(B)$ and $\mu_1^{(s)}(B) \leq \lambda_{1}^{(s)}(B)$, since the strict domain monotonicity of the type $\lambda_{1}^{(s)}(B) < \lambda_{1}^{(s)}(B_r)$ holds true. 
However, it does not seem immediate to understand whether these two inequalities are satisfied, in general.

\medskip

In the present work, we provide partial results on the question $(\mathcal{Q})$. First, we prove the following structure theorem for the eigenspace associated to the first nontrivial eigenvalue.

\begin{thm}\label{thm:main1}
	Let $B \subset \R^N$ be a ball centered at the origin. 
	Let $s \in (0,1)$. 
	Then the following dichotomy occurs:
	\begin{itemize}
		\item Either all eigenfunctions associated to $\mu_1^{(s)}$ are radially symmetric, or
		\item There exists $k \in \N$ such that the eigenspace associated to $\mu_1^{(s)}$ is spanned by $k$ radial eigenfunctions and $N$ eigenfunctions $v_1,\dots,v_N$, where each $v_i$ is antisymmetric with respect to the coordinate hyperplane $\{x_i=0\}$ and has exactly two nodal domains in $B$. 
	\end{itemize} 
\end{thm}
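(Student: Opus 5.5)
Let $E$ denote the eigenspace of $\mu_1^{(s)}$. The plan has four steps: decompose $E$ using the rotation invariance of the problem, show that any non-radial part produces an eigenfunction antisymmetric in one coordinate, upgrade this to all $N$ coordinates by rotation, and finally count nodal domains and pin down the non-radial part exactly.

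\emph{Step 1: splitting off the radial part.} The problem is invariant under the orthogonal group: the kernel $|x-y|^{-N-2s}$, the domain $B$, and the constraints in \eqref{eq:mu1} are all preserved by $u\mapsto u\circ R$ for $R\in O(N)$. Hence $E$, which is finite dimensional, is $O(N)$-invariant. Let $E_{rad}\subset E$ be the subspace of radial functions, with $\dim E_{rad}=k$, and let $E'$ be its orthogonal complement in $E$ with respect to the $L^2(B)$ inner product. The space $E'$ is also $O(N)$-invariant. If $E'=\{0\}$, all eigenfunctions are radial and we are in the first alternative. So assume $E'\neq\{0\}$.

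\emph{Step 2: one antisymmetric eigenfunction.} Take $u\in E'$ with $u\neq 0$. Since $u$ is not radial, there is a reflection $\sigma$ across some hyperplane through $0$ such that $u\circ\sigma\neq u$. Indeed, if $u$ were invariant under all such reflections, it would be invariant under all of $O(N)$, because reflections generate $O(N)$, and hence radial. Then $w=u-u\circ\sigma$ is a nonzero element of $E'$ that is antisymmetric with respect to the hyperplane of $\sigma$. After a rotation we may assume this hyperplane is $\{x_1=0\}$. Rotating further, every coordinate hyperplane carries such an antisymmetric eigenfunction.

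\emph{Step 3: nodal domains, via symmetrization (the main obstacle).} I would show that the minimizer of \eqref{eq:mu1} restricted to functions odd in $x_1$ can be chosen, and in fact must be, nonnegative in the half ball $\{x_1>0\}$. The standard approach is to use $|w|$ on the half ball, extended oddly, as a competitor.

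For odd functions the energy splits into a same-side interaction with kernel $|x-y|^{-N-2s}$ and a cross interaction with kernel $|x-\sigma y|^{-N-2s}$, where $\sigma$ is the reflection in $\{x_1=0\}$. For $x,y$ on the same side one has $|x-y|<|x-\sigma y|$. This gives the pointwise inequality that, for same-side pairs, the combined contribution of the kernels $|x-y|^{-N-2s}-|x-\sigma y|^{-N-2s}>0$ multiplying $|u(x)-u(y)|^2$ and $|u(x)+u(y)|^2$ does not increase, and strictly decreases, under $u\mapsto|u|$ on the half ball, unless $u$ has a sign there. This strictness is the delicate point, and care is needed with the pairs lying in $(\Omega^c)^2$, which are excluded from the integral, and with the values of $u$ outside $B$.

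Since the energy strictly decreases under this operation, any $w\in E$ that is odd in $x_1$ and minimizes among odd functions has constant sign on each half ball. It follows that $w$ has exactly two nodal domains. To see that every odd element of $E'$ is a minimizer among odd functions, note that it attains the value $\mu_1^{(s)}$, which is the global minimum.

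\emph{Step 4: dimension count.} Next I would show that the odd-in-$x_1$ subspace of $E$ is one dimensional. If it contained two independent functions, some nonzero combination would change sign within the half ball, contradicting Step 3. Denote its generator by $v_1$ and define $v_i=v_1\circ R_i$, where $R_i$ swaps $x_1$ and $x_i$.

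Finally, I would show that $E'=\mathrm{span}\{v_1,\dots,v_N\}$. Any $u\in E'$ decomposes into parity components with respect to all the coordinate reflections, and each component lies in $E$. A component even in every variable is invariant under the hyperoctahedral group, but that alone does not make it radial. To close this gap, I would use the uniqueness in Step 3 for every hyperplane direction $e$. This yields a unique antisymmetric function $v_e=v_1\circ R$ with $Re_1=e$, and $v_1$ must be invariant under rotations fixing $e_1$, so $v_1(x)=f(|x|,x_1)$.

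From this, $E'$ is a finite-dimensional $O(N)$-invariant space with no fixed vectors in which each reflection's $(-1)$-eigenspace is one dimensional. Representation theory then forces $E'$ to be the standard representation, and hence $E'=\mathrm{span}\{v_1,\dots,v_N\}$.
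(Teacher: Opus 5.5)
Your route differs from the paper's. The paper uses polarization to show that every first eigenfunction is foliated Schwarz symmetric, and then splits each one as $\tfrac12(u-u\circ\sigma_p)+\tfrac12(u+u\circ\sigma_p)$. You replace this by rotation invariance plus representation theory.

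Several of your pieces are sound. Your competitor $\mathrm{sgn}(x_1)|w|$ is exactly $-P_{e_1}w$, so Step~3's sign conclusion holds once you define it on all of $\R^N$; the set $\R^{2N}\setminus(B^c)^2$ is invariant under reflecting each variable separately. Your proof that the $x_1$-odd part of $E$ is one-dimensional is also correct, and unlike the paper's it avoids the maximum principle.

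\textbf{Gap 1: the final count fails for $N=2$.} Your representation-theoretic last step is false for $N=2$, which the theorem covers. For $O(2)$, every $\mathcal{H}_l$ with $l\ge1$ has a one-dimensional $(-1)$-eigenspace for every reflection. For example, $E'=\{f(|x|)(a\cos3\phi+b\sin3\phi)\}$ is $O(2)$-invariant with no fixed vectors. Its $x_1$-odd part is spanned by $f(|x|)\cos3\phi=f(|x|)(4x_1^3-3x_1|x|^2)/|x|^3$, which is a function of $(|x|,x_1)$. So it has every property you list, yet it is not the standard representation.

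For $N\ge3$ your count is right, provided you note that only the types $\mathcal{H}_l$ occur in $L^2(B)$; abstractly, the determinant character of $O(N)$ also has both of your properties. Granting that, for $l\ge2$ the $x_1$-odd part of $\mathcal{H}_l$ has dimension at least $N-1\ge2$, which excludes it.

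To handle $N=2$ you must use the sign information from Step~3. One-dimensionality of the odd part forces $E'=\{f(|x|)h : h\in\mathcal{H}_l\}$ for a single $l$. Its $x_1$-odd member, $f(|x|)\cos(l\phi)$ or $f(|x|)\sin(l\phi)$, changes sign in the half disk unless $l=1$.

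Alternatively, use foliated Schwarz symmetry as in Corollary~\ref{cor:pol}. A nonradial element of $E'$ that is even under every $\sigma_{e_i}$ would be foliated Schwarz symmetric about some $p$ fixed by all the $\sigma_{e_i}$, which is impossible. Hence taking odd parts is injective on $E'$, and $\dim E'\le N$.

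\textbf{Gap 2: sign does not give two nodal domains.} Step~3 only yields $v_1\ge0$ in $B\cap\{x_1>0\}$, and this does not imply exactly two nodal domains. The equality case of your energy comparison only records $w(x)w(y)\ge0$. That does not rule out zeros of $v_1$ inside the half ball that disconnect $\{v_1>0\}$.

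You need $v_1>0$ in the open half ball, i.e.\ a strong maximum principle for antisymmetric (super)solutions of the nonlocal equation. The paper obtains this from the result of Jarohs--Weth, applied on compact subsets of $B\cap\Sigma_p^+$ and using the continuity of eigenfunctions. This is an additional ingredient that your energy argument does not supply.
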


Thanks to Theorem \ref{thm:main1}, and exploiting the stability of the spectrum of the fractional Neumann Laplacian as $s \to 1^-$ (see Theorem~\ref{thm:stable} below), we can deduce the following symmetry result.

\begin{thm}\label{thm:main2}
	Let $B \subset \R^N$ be a ball centered at the origin. 
	Then there exists $s^* \in (0,1)$ such that for each $s \in (s^*,1)$ the eigenspace associated to $\mu_1^{(s)}$ is spanned by $N$ eigenfunctions $v_1,\dots, v_N$, where each $v_i$ is antisymmetric with respect to the coordinate hyperplane $\{x_i = 0\}$ and has exactly two nodal domains in $B$. 
\end{thm}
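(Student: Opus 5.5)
The plan is to combine the dichotomy of Theorem~\ref{thm:main1} with the spectral stability result (Theorem~\ref{thm:stable}) as $s \to 1^-$. Concretely, Theorem~\ref{thm:main1} reduces the claim to two facts for $s$ close to $1$: the eigenspace of $\mu_1^{(s)}(B)$ contains no radial eigenfunction (so $k=0$, and the first alternative cannot occur), and the second alternative actually holds. Since the first alternative means every eigenfunction is radial, it is enough to prove that for $s$ close to $1$ no radial function belongs to the eigenspace of $\mu_1^{(s)}(B)$. The second alternative then holds with $k=0$, and we get exactly the $N$ antisymmetric eigenfunctions with two nodal domains.

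To exclude radial eigenfunctions, I would split the spectrum according to the invariant subspaces of radial and nonradial functions. The operator commutes with rotations, so $H^s_{B,0}$ splits orthogonally into radial functions and their complement, and both are invariant for the quadratic form in \eqref{eq:weak}. Let $\mu_{1,\mathrm{rad}}^{(s)}$ be the infimum in \eqref{eq:mu1} restricted to radial $u$, and $\mu_{1,\mathrm{anti}}^{(s)}$ the infimum restricted to functions odd in $x_1$. A radial eigenfunction at level $\mu_1^{(s)}$ exists iff $\mu_{1,\mathrm{rad}}^{(s)} = \mu_1^{(s)}$. It therefore suffices to show
\[
\mu_{1,\mathrm{anti}}^{(s)} < \mu_{1,\mathrm{rad}}^{(s)} \quad \text{for } s \in (s^*,1).
\]
In the local case this inequality is strict: the first positive zero of $\Theta_1$ is strictly smaller than that of $\Theta_0$ (equivalently, use \eqref{eq:friedland}), and the radial first nontrivial Neumann eigenvalue is the first positive zero of $\Theta_0$.

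The main step is to pass each restricted eigenvalue to its local limit:
\[
\mu_{1,\mathrm{rad}}^{(s)} \to \mu_{1,\mathrm{rad}}^{(1)}, \qquad \mu_{1,\mathrm{anti}}^{(s)} \to \mu_{1,\mathrm{anti}}^{(1)} \quad \text{as } s\to1^-.
\]
I expect Theorem~\ref{thm:stable} to be stated for the full spectrum, with eigenvalue convergence $\mu_n^{(s)}\to\mu_n^{(1)}$ and $L^2(B)$ convergence of eigenfunctions along subsequences. Its proof should be a $\Gamma$-convergence/compactness argument (Bourgain--Brezis--Mironescu type) with the normalization $c_{N,s}\sim (1-s)$. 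That argument is symmetry-preserving: the limsup (recovery) inequality uses smooth test functions, which can be taken radial or odd, and the liminf plus compactness step keeps radiality and oddness under $L^2$ limits. So I would rerun the stability proof inside each symmetry class, or deduce it from the full-spectrum statement as follows. Eigenfunctions of $\mu_{1,\mathrm{rad}}^{(s)}$ converge along subsequences in $L^2(B)$ to a radial function with zero mean and unit norm. It is a local Neumann eigenfunction whose eigenvalue is $\lim \mu_{1,\mathrm{rad}}^{(s)}$, hence this limit is at least $\mu_{1,\mathrm{rad}}^{(1)}$. The upper bound comes from the recovery sequence. The odd class is handled the same way.

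The hardest point is the liminf/compactness part within a symmetry class, i.e.\ uniform $H^s$-type bounds as $s\to1$ that give strong $L^2(B)$ compactness. This should be available from the proof of Theorem~\ref{thm:stable}, and it is the step where the ball and symmetry structure must be tracked carefully. With the two limits in hand, strict inequality in the limit gives $\mu_{1,\mathrm{anti}}^{(s)} < \mu_{1,\mathrm{rad}}^{(s)}$ for $s$ near $1$. Hence $\mu_1^{(s)}\le\mu_{1,\mathrm{anti}}^{(s)}<\mu_{1,\mathrm{rad}}^{(s)}$, so no radial eigenfunction exists, and Theorem~\ref{thm:main1} concludes the proof.
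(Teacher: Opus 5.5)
Your proposal is correct and is essentially the paper's argument: you reduce via Theorem~\ref{thm:main1} to excluding radial eigenfunctions at level $\mu_1^{(s)}$, then use the stability of Theorem~\ref{thm:stable} as $s\to1^-$ (compactness from Lemma~\ref{lem:limsup}, radiality preserved under $L^2$ limits) together with the fact that no first nontrivial local Neumann eigenfunction of the ball is radial. The paper phrases this as a contradiction with a sequence of radial eigenfunctions at $\mu_1^{(s_k)}$, which shows that your convergence of $\mu_{1,\mathrm{anti}}^{(s)}$ (and the symmetric recovery sequences it requires) is unnecessary, since $\mu_1^{(s)}\to\mu_1^{(1)}$ together with $\liminf_{s\to1^-}\mu_{1,\mathrm{rad}}^{(s)}\ge\mu_{1,\mathrm{rad}}^{(1)}>\mu_1^{(1)}$ already suffices.
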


It remains an interesting open problem to determine whether the result of Theorem \ref{thm:main2} actually holds true for every $s \in (0,1)$. 

\medskip
The paper is organized as follows. 
In Section~\ref{sec:prelim}, we introduce a few notions on the functional framework and discuss a  variational characterization of eigenvalues. 
In Section~\ref{sec:convergence}, we prove the convergence of the nonlocal Neumann eigenvalues towards the local Neumann eigenvalues as $s \to 1^-$.
Section~\ref{sec:symmetry} is devoted to the symmetry analysis of the first nontrivial eigenfunctions and to the proof of Theorems~\ref{thm:main1} and \ref{thm:main2}.

\section{Notation and preliminary results}\label{sec:prelim}

\subsection{Functional setting}

As above, let $s \in (0,1)$, $N \geq 2$, and $\Omega \subset \R^N$ be a bounded Lipschitz domain.  
Recall the definition of the Hilbert space $H^s_{\Omega,0}$ from \cite{dipierrorosotonvaldinoci}: 
\[ 
H^s_{\Omega,0} 
= 
\left\{ u:\R^N \to \R \,\Big|\, u \text{ measurable},\, \|u\|_{L^2(\Omega)} + [u]_{H^s_{\Omega,0}} < +\infty \right\},\]
where 
$$
\|u\|_{L^2(\Omega)}
=
\left(
\int_\Omega u^2 \,dx
\right)^{\frac{1}{2}}
\quad \text{and} \quad 
[u]_{H_{\Omega,0}^s}
=
\left(
\iint_{\R^{2N}\setminus (\Omega^c)^2} \frac{|u(x)-u(y)|^2}{|x-y|^{N+2s}}\,dx\,dy \right)^{\frac{1}{2}}.
$$
Observe that
\begin{equation}\label{eq:decomp:1}
	[u]_{H_{\Omega,0}^s}^2
	= \int_{\Omega}\int_\Omega 
	\frac{|u(x)-u(y)|^2}{|x-y|^{N+2s}}\,dx\,dy + \,2 \int_{\Omega} \int_{\Omega^c} \frac{|u(x)-u(y)|^2}{|x-y|^{N+2s}}\,dx\,dy.
\end{equation}
We will also denote by $H^s(\Omega)$ the standard fractional Sobolev space with the Gagliardo seminorm $[\cdot]_{H^s(\Omega)}$ defined as
$$
[u]_{H^s(\Omega)}
=
\left(
\int_{\Omega}
\int_{\Omega} \frac{|u(x)-u(y)|^2}{|x-y|^{N+2s}}\,dx\,dy \right)^{\frac{1}{2}}.
$$

\begin{rem}\label{rem:compact}
By \eqref{eq:decomp:1}, the restriction of any function from $H_{\Omega,0}^s$ to $\Omega$ belongs to $H^s(\Omega)$. 
In particular, since $H^s(\Omega)$ is embedded in $L^2(\Omega)$ compactly (see, e.g, \cite[Theorem~7.1]{dinezzapalatuccivaldinoci}), 
we get the same property for restrictions of functions from $H_{\Omega,0}^s$.
\end{rem}

The relation between $H_{\Omega,0}^s$ and $H^s(\Omega)$ noted in Remark~\ref{rem:compact} is complemented by the following general result. 
\begin{prop}\label{prop:minimalextension}
For any $u \in H^s(\Omega)$, there exists a unique function $\widetilde{u}: \mathbb{R}^N \to \mathbb{R}$ such that  $\widetilde{u} \in H^s_{\Omega,0}$, $\widetilde{u} = u$ in $\Omega$, and
\[ 
[\widetilde{u}]_{H^s_{\Omega,0}} \leq [v]_{H^s_{\Omega,0}}
\]
for every $v \in H^s_{\Omega,0}$ satisfying $v=u$ in $\Omega$. 
Moreover, $\mathcal{N}_s \widetilde{u} = 0$ in $\Omega^c$.
\end{prop}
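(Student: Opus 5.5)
The plan is to minimize the quantity $[v]_{H^s_{\Omega,0}}^2$ over the affine set $\mathcal{A}_u := \{v \in H^s_{\Omega,0} : v = u \text{ in } \Omega\}$. By the decomposition \eqref{eq:decomp:1}, the term over $\Omega\times\Omega$ is fixed and equals $[u]_{H^s(\Omega)}^2$. So it suffices to minimize
\[
J(w) := \int_{\Omega^c} \int_{\Omega} \frac{|u(y)-w(x)|^2}{|x-y|^{N+2s}}\,dy\,dx
\]
over measurable $w:\Omega^c\to\R$. This functional decouples pointwise in $x \in \Omega^c$: for each fixed $x$ we minimize a quadratic function of the single real number $t = w(x)$. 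Setting
\[
k(x) := \int_\Omega \frac{dy}{|x-y|^{N+2s}},
\]
which is finite and positive for $x \in \Omega^c$, the unique minimizer is the weighted average
\[
\widetilde u(x) := \frac{1}{k(x)} \int_\Omega \frac{u(y)}{|x-y|^{N+2s}}\,dy, \qquad x \in \Omega^c,
\]
and we set $\widetilde u = u$ in $\Omega$. Since $\partial\Omega$ is Lipschitz it has measure zero, so the values there are irrelevant.

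The first step is to check that $\mathcal{A}_u$ is nonempty, or equivalently that $J(\widetilde u) < \infty$. The cleanest route is to compare with a known finite competitor: if $Eu \in H^s(\R^N)$ is an extension of $u$ (which exists for Lipschitz domains, see e.g. \cite{dinezzapalatuccivaldinoci}), then
\[
J(Eu) \le [Eu]_{H^s(\R^N)}^2 < \infty .
\]
Pointwise minimality then gives $J(\widetilde u) \le J(Eu)$. Measurability of $\widetilde u$ is clear from Fubini, and finiteness of the integral defining it for a.e.\ $x$ follows from Cauchy--Schwarz, because $\int_\Omega |u(y)|\,|x-y|^{-N-2s}\,dy \le \|u\|_{L^2(\Omega)}\,k_2(x)^{1/2}$ with $k_2(x) := \int_\Omega |x-y|^{-2N-4s}\,dy$ finite off $\overline\Omega$. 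Hence $\widetilde u \in H^s_{\Omega,0}$, and $[\widetilde u]_{H^s_{\Omega,0}} \le [v]_{H^s_{\Omega,0}}$ for all $v \in \mathcal{A}_u$.

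The remaining steps are short. For uniqueness, suppose $v \in \mathcal{A}_u$ attains the same value. Then $J(v) = J(\widetilde u)$, while pointwise $J$-integrands satisfy $\ge$. So they agree a.e., and strict convexity of $t\mapsto\int_\Omega |u(y)-t|^2|x-y|^{-N-2s}\,dy$ forces $v = \widetilde u$ a.e.\ in $\Omega^c$. One could alternatively invoke strict convexity of $[\cdot]^2$ on the affine set. For the Neumann condition, the definition of $\widetilde u$ is exactly $k(x)\,\widetilde u(x) - \int_\Omega u(y)|x-y|^{-N-2s}\,dy = 0$, that is, $\mathcal{N}_s\widetilde u(x) = 0$ for every $x\in\Omega^c$ where the integral converges. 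This is the first-order condition of the pointwise minimization.

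The main obstacle is the nonemptiness and finiteness step. It relies on an $H^s$ extension operator for Lipschitz domains. If one wishes to avoid citing it, the alternative is to bound $J(\widetilde u)$ directly, by splitting $\Omega^c$ into a far region (where $k(x)\sim|x|^{-N-2s}$ and the bound is trivial) and a near-boundary region. The near-boundary region requires a Hardy-type estimate, and that is delicate. So I would cite the extension theorem.
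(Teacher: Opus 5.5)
Your proof is correct, but it takes a different route from the paper's.

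\textbf{The paper's route.} It argues abstractly. It shows the affine set $\{v \in H^s_{\Omega,0} : v = u \text{ in } \Omega\}$ is nonempty via the same extension theorem you cite. It then obtains a minimizer by the direct method and gets uniqueness from strict convexity of $[\cdot]^2_{H^s_{\Omega,0}}$. Only afterwards does it derive $\mathcal{N}_s\widetilde u = 0$, as an Euler--Lagrange equation: it differentiates $t \mapsto [\widetilde u + t\varphi]^2_{H^s_{\Omega,0}}$ for $\varphi \in C_c(\Omega^c)$, and uses H\"older and Fubini to rewrite the first variation as $2\int_{\Omega^c} (\mathcal{N}_s \widetilde u)\,\varphi\,dx$.

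\textbf{Your route.} You use the structure of the domain of integration. The set $\R^{2N}\setminus(\Omega^c)^2$ contains no exterior--exterior pairs, so exterior values interact only with the fixed interior datum $u$. The problem therefore splits into independent one-dimensional quadratics $q_x(t) = k(x)t^2 - 2m(x)t + \ell(x)$. Each has the explicit minimizer $m(x)/k(x)$, which is exactly the formula the paper only records (citing Dipierro--Ros-Oton--Valdinoci) in the remark after the proposition.

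\textbf{What each approach buys.} Your route gives existence, uniqueness and the Neumann condition at once. Uniqueness follows from $q_x(v(x)) - q_x(\widetilde u(x)) = k(x)\,(v(x)-\widetilde u(x))^2$. The Neumann condition holds at every point of $\Omega^c$, not just almost everywhere. You need no weak compactness or lower semicontinuity, and no integrability bookkeeping to justify the first variation. In both arguments the extension theorem serves only to guarantee the energy is finite, i.e.\ $J(\widetilde u) < \infty$. The paper's direct-method template does not depend on this decoupling, so it would carry over to energies in which exterior points interact with one another, where no pointwise formula is available.
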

\begin{proof}
By \cite[Theorem~5.4]{dinezzapalatuccivaldinoci}, any $u \in H^s(\Omega)$ admits an extension $\hat{u} \in H^s(\mathbb{R}^N)$ such that 
$$
\|\hat{u}\|_{L^2(\mathbb{R}^N)} 
+
[\hat{u}]_{H^s(\mathbb{R}^N)}
\leq
C
(\|u\|_{L^2(\Omega)} 
+
[u]_{H^s(\Omega)}),
$$
where $C>0$ is a constant independent from $u$. 
Noting that $[\hat{u}]_{H^s_{\Omega,0}} \leq [\hat{u}]_{H^s(\mathbb{R}^N)}$, we conclude that $\hat{u} \in H^s_{\Omega,0}$. 
Let us consider the minimization problem
\begin{equation}\label{eq:minimization1} 
\inf \left\{ [v]_{H^s_{\Omega,0}}^2 \,\Big|\, v \in H^s_{\Omega,0},\,v=u \text{ in } \Omega\right\}.
\end{equation}
By means of the direct method of the Calculus of Variations and Remark~\ref{rem:compact}, 
this problem admits a minimizer $\widetilde{u} \in H^s_{\Omega,0}$, which is unique by the strict convexity of the functional. 
Therefore, $\widetilde{u}=u$ in $\Omega$, and $[\widetilde{u}]^2_{H^s_{\Omega,0}} \leq [v]_{H^s_{\Omega,0}}^2$ for every $v \in H^s_{\Omega,0}$ satisfying $v=u$ in $\Omega$. 

Let us now prove that $\mathcal{N}_s \widetilde{u} = 0$ in $\Omega^c$.
Let $t \in \R$ and $\varphi \in C_c(\Omega^c)$. 
It is not hard to see from \eqref{eq:decomp:1} that $\varphi \in H^s_{\Omega,0}$, and we evidently have $\widetilde{u}+t \varphi = u$ in $\Omega$. 
By the minimality of $\widetilde{u}$,
\[ \frac{d}{dt}\left([\widetilde{u}+t\varphi]^2_{H^s_{\Omega,0}} \right)_{|t=0} = 0,\]
which translates to
\begin{equation}\label{eq:esten1}  
\iint_{(\R^{2N}) \setminus (\Omega^c)^2} \frac{(\widetilde{u}(x)-\widetilde{u}(y))(\varphi(x)-\varphi(y))}{|x-y|^{N+2s}}\,dx\,dy = 0.
\end{equation}
Notice that
$$
\iint_{(\R^{2N}) \setminus (\Omega^c)^2} \frac{(\widetilde{u}(x)-\widetilde{u}(y))(\varphi(x)-\varphi(y))}{|x-y|^{N+2s}}\,dx\,dy
=
2\int_{\Omega} \int_{\Omega^c} \frac{(\widetilde{u}(x)-\widetilde{u}(y))\varphi(x)}{|x-y|^{N+2s}}\,dx\,dy.
$$
Recalling that $\widetilde{u}, \varphi \in H^s_{\Omega,0}$, we apply H\"older's inequality to observe that $\frac{(\widetilde{u}(x)-\widetilde{u}(y))\varphi(x)}{|x-y|^{N+2s}} \in L^1(\Omega \times \Omega^c)$. 
Thus, Fubini's theorem, the definition of $\mathcal{N}_s \widetilde{u}$, and the equality \eqref{eq:esten1} yield
\[ \int_{\Omega^c} (\mathcal{N}_s \widetilde{u}) \varphi \,dx = 0.\]
Since $\varphi$ is arbitrary, we conclude that $\mathcal{N}_s \widetilde{u} = 0$ in $\Omega^c$.
\end{proof}

\begin{rem}
In general, by the definition \eqref{eq:N} of $\mathcal{N}_s$, 
for any measurable function $u: \Omega \to \mathbb{R}$ with $\int_\Omega u \,dx <+\infty$, a unique function $\widetilde{u}: \mathbb{R}^N \to \mathbb{R}$ satisfying $\widetilde{u} = u$ in $\Omega$ and 
$\mathcal{N}_s \widetilde{u} = 0$ in $\Omega^c$ can be defined in $\Omega^c$ via the formula
\begin{equation}\label{eq:u-outside2} 
	\widetilde{u}(x) = \frac{\displaystyle \int_\Omega \frac{u(y)}{|x-y|^{N+2s}}\,dy}{\displaystyle \int_\Omega \frac{1}{|x-y|^{N+2s}}\,dy},
\end{equation}
cf.\ \cite[Remark~3.12]{dipierrorosotonvaldinoci}.
\end{rem}

For a given $u \in H^s(\Omega)$, we will call the function $\widetilde{u}$ provided by Proposition~\ref{prop:minimalextension} the \emph{$s$-minimal extension} of $u$. We define the following subset of $H^s_{\Omega,0}$:
$$
\widetilde{H}^s_{\Omega,0} 
:= 
\left\{ 
u \in H^s_{\Omega,0}\,|\, \mathcal{N}_s u = 0 \text{ in } \Omega^c
\right\}.
$$

\begin{prop}\label{prop:closedsubspace}
$\widetilde{H}^s_{\Omega,0}$ is a closed subspace of $H^s_{\Omega,0}$, and hence a Hilbert space.
\end{prop}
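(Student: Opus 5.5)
Linearity is immediate: $\mathcal N_s$ is linear in $u$ (pointwise a.e. in $\Omega^c$, where the defining integral converges absolutely for $u\in H^s_{\Omega,0}$). So if $u,v\in \widetilde{H}^s_{\Omega,0}$ and $\alpha,\beta\in\R$, then $\mathcal N_s(\alpha u+\beta v)=\alpha\mathcal N_s u+\beta \mathcal N_s v=0$ in $\Omega^c$. Before using this, I will check that for $u\in H^s_{\Omega,0}$ the integral $\int_\Omega \frac{|u(x)-u(y)|}{|x-y|^{N+2s}}dy$ is finite for a.e. $x\in\Omega^c$. Since $\Omega$ is bounded, for $x$ at positive distance from $\overline\Omega$ the kernel is bounded on $\Omega$ and $u\in L^2(\Omega)$. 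More generally, this finiteness is exactly what the Fubini argument in the proof of Proposition~\ref{prop:minimalextension} gives: for every $\varphi\in C_c(\Omega^c)$, the function $(u(x)-u(y))\varphi(x)|x-y|^{-N-2s}$ lies in $L^1(\Omega\times\Omega^c)$ by H\"older.

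For closedness, I will not work with pointwise convergence of $\mathcal N_s u_n$. Instead I will use the weak characterization already extracted in the proof of Proposition~\ref{prop:minimalextension}. For $u\in H^s_{\Omega,0}$, the identity
\[
\int_{\Omega^c} (\mathcal N_s u)\varphi\,dx=\frac{c_{N,s}}{2}\iint_{\R^{2N}\setminus(\Omega^c)^2}\frac{(u(x)-u(y))(\varphi(x)-\varphi(y))}{|x-y|^{N+2s}}\,dx\,dy
\]
holds for all $\varphi\in C_c(\Omega^c)$. Since $\varphi\in H^s_{\Omega,0}$, the right-hand side is a bounded bilinear form $\mathcal E(u,\varphi)$. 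Hence $u\in \widetilde{H}^s_{\Omega,0}$ if and only if $\mathcal E(u,\varphi)=0$ for all $\varphi\in C_c(\Omega^c)$. The reverse implication uses that $\mathcal N_s u\in L^1_{loc}(\Omega^c)$, which the same Fubini argument gives, together with the fundamental lemma of the calculus of variations.

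With this characterization, $\widetilde{H}^s_{\Omega,0}=\bigcap_{\varphi}\ker \mathcal E(\cdot,\varphi)$. By Cauchy--Schwarz, $|\mathcal E(u,\varphi)|\le \frac{c_{N,s}}2[u]_{H^s_{\Omega,0}}[\varphi]_{H^s_{\Omega,0}}$, so each map $u\mapsto\mathcal E(u,\varphi)$ is a continuous linear functional on $H^s_{\Omega,0}$. An intersection of kernels of continuous functionals is a closed subspace. A closed subspace of the Hilbert space $H^s_{\Omega,0}$ (a Hilbert space by \cite{dipierrorosotonvaldinoci}) is itself a Hilbert space.

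The main obstacle is justifying the Fubini step, namely that $\mathcal N_s u\in L^1_{loc}(\Omega^c)$, so that "$\mathcal N_s u=0$ a.e." is equivalent to the weak identity. I would handle it as in Proposition~\ref{prop:minimalextension}. For $K\Subset\Omega^c$ compact, H\"older gives
\[
\int_K\int_\Omega\frac{|u(x)-u(y)|}{|x-y|^{N+2s}}\,dy\,dx\le [u]_{H^s_{\Omega,0}}\Bigl(\int_K\int_\Omega |x-y|^{-N-2s}\,dy\,dx\Bigr)^{1/2}.
\]
The last integral is finite because $\mathrm{dist}(K,\Omega)>0$ and $\Omega$ is bounded.
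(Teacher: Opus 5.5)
Your proof is correct and follows the paper's argument. Both proofs use Fubini to write $\int_{\Omega^c}(\mathcal N_s u)\varphi\,dx$ as a bilinear form in $u$ and $\varphi\in C_c(\Omega^c)$, note that this form is continuous in $u$ by H\"older, and conclude by the arbitrariness of $\varphi$. The only differences are that you phrase closedness as an intersection of kernels of continuous functionals instead of passing to the limit along a sequence, and that you make explicit the $L^1_{\mathrm{loc}}(\Omega^c)$ bound on $\mathcal N_s u$, which the paper uses only implicitly.
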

\begin{proof}
Since the mapping $u \mapsto \mathcal{N}_s u$ is linear, we see that $\widetilde{H}^s_{\Omega,0}$ is indeed a subspace of $H^s_{\Omega,0}$. 
Let us show that it is closed.
Let $\{u_k\}_{k \in \N^*}$ be a sequence in $\widetilde{H}^s_{\Omega,0}$ converging in $H^s_{\Omega,0}$ to a function $u \in H^s_{\Omega,0}$. 
In particular, for any $\varphi \in C_c(\Omega^c)$ we apply the H\"older inequality to obtain 
$$
 \iint_{\R^{2N} \setminus (\Omega^c)^2} \frac{(u_k(x)-u_k(y))(\varphi(x)-\varphi(y))}{|x-y|^{N+2s}}\,dx\,dy 
 \to 
 \iint_{\R^{2N} \setminus (\Omega^c)^2} \frac{(u(x)-u(y))(\varphi(x)-\varphi(y))}{|x-y|^{N+2s}}\,dx\,dy
$$
as $k \to +\infty$. 
As in the proof of Proposition~\ref{prop:minimalextension}, we use Fubini's theorem to get
\begin{equation}\label{eq:closed} 
\iint_{\R^{2N} \setminus (\Omega^c)^2} \frac{(u_k(x)-u_k(y))(\varphi(x)-\varphi(y))}{|x-y|^{N+2s}}\,dx\,dy 
=
2\int_{\Omega^c} (\mathcal{N}_s u_k) \varphi \,dx 
\end{equation}
for every $u_k$, and the same arguments give the equality \eqref{eq:closed} also for $u$. 
Therefore, since $u_k \in \widetilde{H}^s_{\Omega,0}$, we infer from the convergence that 
\[ \int_{\Omega^c} (\mathcal{N}_s u) \varphi \,dx = 0.\]
Thanks to the arbitrariness of $\varphi \in C_c(\Omega^c)$, we get $u \in \widetilde{H}^s_{\Omega,0}$.
\end{proof}	

\begin{rem}\label{rem:identification}
For a fixed $s \in (0,1)$, 
Proposition \ref{prop:minimalextension} allows to identify a function from $H^s(\Omega)$ with its (unique) $s$-minimal extension, which belongs to $\widetilde{H}^s_{\Omega,0}$; conversely, every function in $\widetilde{H}^s_{\Omega,0}$ can be uniquely associated with its restriction to $\Omega$, which belongs to $H^s(\Omega)$. 
In other words, the spaces $H^s(\Omega)$ and  $\widetilde{H}^s_{\Omega,0}$ can be equivalently characterized as
$$
H^s(\Omega)
=
\left\{
u|_{\Omega} \,\Big|\, u \in \widetilde{H}^s_{\Omega,0}
\right\}
\quad \text{and} \quad 
\widetilde{H}^s_{\Omega,0}
=
\left\{
\widetilde{u} \,|\, u \in H^s(\Omega)
\right\}.
$$
Occasionally, we will switch between these two points of view, when no ambiguity arises.
\end{rem}

Let us introduce a subspace of $L^2(\Omega)$ consisting of functions with zero mean in $\Omega$:
\begin{align}
L^2_0(\Omega) 
&:= 
\left\{u \in L^2(\Omega)\,\bigg|\, \int_\Omega u \,dx = 0\right\}. 
\end{align}
Hereinafter, we write, for example,  $\widetilde{H}^s_{\Omega,0} \cap L^2_0(\Omega)$, meaning that the restriction of any function from $\widetilde{H}^s_{\Omega,0}$ to $\Omega$ belongs to $L^2_0(\Omega)$. 
Thanks to \cite[Lemma~3.10]{dipierrorosotonvaldinoci} and Proposition~\ref{prop:closedsubspace}, $H^s_{\Omega,0} \cap L^2_0(\Omega)$ and $\widetilde{H}^s_{\Omega,0} \cap L^2_0(\Omega)$ are closed subspaces of $H^s_{\Omega,0}$, and hence they are Hilbert spaces on their own.

\subsection{Neumann eigenvalue problem}

Recall that, according to \cite[Theorem 3.11]{dipierrorosotonvaldinoci}, there exist a discrete monotone sequence $\{\mu_n^{(s)}\}_{n \in \N}$ of eigenvalues of the fractional Neumann Laplacian in $\Omega$ satisfying 
$$
0 = \mu_0^{(s)} < \mu_1^{(s)} \leq \dots \leq \mu_n^{(s)} \to +\infty, \quad n \to +\infty, 
$$ 
and a sequence of corresponding eigenfunctions $\{u_n^{(s)}\}_{n \in \N} \subset H^s_{\Omega,0}$ which are mutually orthogonal in $L^2(\Omega)$. 
It is not hard to see that nonzero constant functions are eigenfunctions corresponding to $\mu_0^{(s)} = 0$.
This implies that any higher eigenfunction $u_n^{(s)}$  has zero mean in $\Omega$.
Moreover, each $u_n^{(s)}$ also satisfies $\mathcal{N}_s u_n^{(s)} = 0$ in $\Omega^c$, so that $u_n^{(s)} \in \widetilde{H}^s_{\Omega,0} \cap L^2_0(\Omega)$, cf.\ \cite[Theorem~2.8]{ML1}.

Eigenfunctions $u_n^{(s)}$ have some additional regularity properties. By \cite[Proposition~3.4]{ML1}, they are bounded in $\R^N$. As a consequence, \cite[Theorem~1.1]{AFNRS} implies that each $u_n^{(s)} \in C^\alpha(\overline{\Omega})$ for some $\alpha \in (0,1)$. The result of \cite[Proposition~5.2]{dipierrorosotonvaldinoci} then yields $u_n^{(s)} \in C(\mathbb{R}^N)$.

\medskip
By standard spectral theory and Remark~\ref{rem:compact}, all nontrivial eigenvalues of the fractional Neumann Laplacian in $\Omega$ can be characterized by the relation
\begin{equation}\label{eq:muk-s}
	\mu_n^{(s)} 
	= 
	\min_{A\in M_n} 
	\max_{u \in A \setminus \{0\}} \frac{\frac{c_{N,s}}{2} \, [u]_{H_{\Omega,0}^s}^2}{\|u\|_{L^2(\Omega)}^2},
\quad n \geq 1,
\end{equation} 
where $M_n$ is the collections of $n$-dimensional subspaces of $H_{\Omega,0}^s \cap L^2_0(\Omega)$. 
In much the same way, all nontrivial eigenvalues of the local Neumann Laplacian in $\Omega$ have the following characterization:
\begin{equation}\label{eq:muk-1}
	\mu_n^{(1)}
	= 
	\min_{A\in Y_n} 
	\max_{u \in A\setminus \{0\}} \frac{\|\nabla u\|_{L^2(\Omega)}^2}{\|u\|_{L^2(\Omega)}^2},
\quad n \geq 1,
\end{equation} 
where $Y_n$ is the collections of $n$-dimensional subspaces of $H^1(\Omega) \cap L^2_0(\Omega)$. 

The following proposition shows that it is possible to characterize eigenvalues as critical values over the Hilbert space $\widetilde{H}^s_{\Omega,0} \cap L^2_0(\Omega)$.

\begin{prop}\label{prop:courant-fisher}
	For any $n \in \N^*$, let 
	\[ \tilde{\mu}_n^{(s)} := \min_{A \in \widetilde{M}_n} \max_{u \in A} \frac{\frac{c_{N,s}}{2}  [u]_{H^s_{\Omega,0}}^2}{\|u\|_{L^2(\Omega)}^2},\]
	where $\widetilde{M}_n$ is the collection of $n$-dimensional subspaces of $\widetilde{H}^s_{\Omega,0} \cap L^2_0(\Omega)$.
	Then $\tilde{\mu}_n^{(s)} = \mu_n^{(s)}$. 
\end{prop}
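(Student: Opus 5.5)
We prove the two inequalities $\tilde{\mu}_n^{(s)} \geq \mu_n^{(s)}$ and $\tilde{\mu}_n^{(s)} \leq \mu_n^{(s)}$ separately.

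\emph{The inequality $\tilde{\mu}_n^{(s)} \geq \mu_n^{(s)}$.} This is immediate. Since $\widetilde{H}^s_{\Omega,0} \cap L^2_0(\Omega)$ is a subspace of $H^s_{\Omega,0} \cap L^2_0(\Omega)$, every $A \in \widetilde{M}_n$ also belongs to $M_n$. The minimum in \eqref{eq:muk-s} is therefore taken over a larger family, which gives the bound.

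\emph{The inequality $\tilde{\mu}_n^{(s)} \leq \mu_n^{(s)}$.} The idea is to replace each function by its $s$-minimal extension. Take an optimal subspace $A \in M_n$ in \eqref{eq:muk-s}, for instance the span of $u_1^{(s)},\dots,u_n^{(s)}$. Define the map $T: H^s_{\Omega,0} \to \widetilde{H}^s_{\Omega,0}$ by $Tu = \widetilde{u|_\Omega}$, given by Proposition~\ref{prop:minimalextension}.

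\begin{itemize}
\item[(i)] $T$ is linear. This can be seen in two ways. One can use the explicit formula \eqref{eq:u-outside2}, which is linear in $u$. Alternatively, the Euler--Lagrange characterization shows that $\widetilde{u}$ is the unique element of $\widetilde{H}^s_{\Omega,0}$ agreeing with $u$ in $\Omega$, since $\mathcal{N}_s$ is linear and the minimization problem \eqref{eq:minimization1} is uniquely solvable.
\item[(ii)] $Tu = u$ in $\Omega$. Hence $\|Tu\|_{L^2(\Omega)} = \|u\|_{L^2(\Omega)}$ and the zero-mean condition is preserved, so $T$ maps into $\widetilde{H}^s_{\Omega,0} \cap L^2_0(\Omega)$.
\item[(iii)] $T$ is injective on $A$. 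If $Tu = 0$, then $u = 0$ in $\Omega$, so $\|u\|_{L^2(\Omega)} = 0$. The Rayleigh quotient in \eqref{eq:muk-s} is only defined for $u \neq 0$ in $L^2(\Omega)$, so $u \mapsto \|u\|_{L^2(\Omega)}$ is a norm on $A$. Thus $u = 0$ in $\Omega$ forces $u = 0$ as an element of $A$.
\end{itemize}

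It follows that $TA$ is $n$-dimensional, so $TA \in \widetilde{M}_n$. By minimality of the extension, $[Tu]_{H^s_{\Omega,0}} \leq [u]_{H^s_{\Omega,0}}$ for every $u$. Taking maxima of the Rayleigh quotient, we get
\[
\tilde{\mu}_n^{(s)} \leq \max_{TA} \leq \max_{A} = \mu_n^{(s)}.
\]

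\emph{Expected difficulty.} There is a subtle point in step (iii): an element of $A$ might a priori vanish in $\Omega$ without vanishing outside. This needs to be handled cleanly, either as above (the $L^2(\Omega)$-norm is a norm on $A$) or by choosing $A$ spanned by eigenfunctions. In the latter case each $u_i^{(s)}$ already lies in $\widetilde{H}^s_{\Omega,0}$, so $A \in \widetilde{M}_n$ directly and $T$ is not even needed.

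Indeed, the simplest route uses the facts recalled earlier: the eigenfunctions $u_n^{(s)}$ lie in $\widetilde{H}^s_{\Omega,0} \cap L^2_0(\Omega)$ and are $L^2(\Omega)$-orthogonal. The span of $u_1^{(s)},\dots,u_n^{(s)}$ is then admissible for $\tilde{\mu}_n^{(s)}$. On this span the quotient is at most $\mu_n^{(s)}$, by the weak formulation \eqref{eq:weak} and orthogonality: the cross terms vanish, and the diagonal terms give $\mu_i^{(s)}\|u_i^{(s)}\|^2_{L^2(\Omega)}$ with $\mu_i^{(s)} \leq \mu_n^{(s)}$. I would present this as the main argument and keep the minimal-extension argument as an alternative.
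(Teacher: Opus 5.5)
Your proof is correct, and the argument you choose as your main one is exactly the paper's. The inclusion $\widetilde{M}_n \subset M_n$ gives $\mu_n^{(s)} \le \tilde{\mu}_n^{(s)}$. For the reverse inequality, the span of the $L^2(\Omega)$-orthogonal eigenfunctions $u_1^{(s)},\dots,u_n^{(s)}$ lies in $\widetilde{H}^s_{\Omega,0}\cap L^2_0(\Omega)$ and is therefore an admissible competitor, on which the quotient is at most $\mu_n^{(s)}$.

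Your alternative via the $s$-minimal extension map is also valid. It has the mild advantage of not needing the cited fact that eigenfunctions satisfy $\mathcal{N}_s u = 0$ in $\Omega^c$. Its injectivity step is most cleanly justified by noting that an optimal $A$ cannot contain a nonzero element vanishing in $\Omega$, since that element's quotient would be $+\infty$.
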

\begin{proof}
	Since $\widetilde{M}_n \subset M_n$, it trivially holds $\mu_n^{(s)} \leq \widetilde{\mu}_n^{(s)}$. 
	Let $\widetilde{A} = \{u_1^{(s)},\dots,u_n^{(s)}\}$.
	Since each $u_n^{(s)}$ belongs to $\widetilde{H}^s_{\Omega,0} \cap L^2_0(\Omega)$ and all eigenfunctions are mutually orthogonal in $L^2(\Omega)$, we have $\widetilde{A} \in \widetilde{M}_n$.
	Therefore,
	\[ 
	\widetilde{\mu}_n^{(s)} \leq \max_{u \in \widetilde{A}} \frac{\frac{c_{N,s}}{2}  [u]_{H^s_{\Omega,0}}^2}{\|u\|_{L^2(\Omega)}^2} = \mu_n^{(s)},
	\]
	and we conclude that $\widetilde{\mu}_n^{(s)} = \mu_n^{(s)}$.
\end{proof}

As a corollary of Proposition~\ref{prop:courant-fisher}, and in view of Remark~\ref{rem:identification}, we get the following equivalent characterization. 
\begin{cor}\label{cor:courant-fisher}
	For any $n \in \N^*$, it holds
	\[ 
	\mu_n^{(s)} = \min_{A \in \widetilde{M}_n^*} \max_{u \in A} \frac{\frac{c_{N,s}}{2}  [\widetilde{u}]_{H^s_{\Omega,0}}^2}{\|u\|_{L^2(\Omega)}^2},
	\]
	where $\widetilde{M}_n^*$ is the collection of $n$-dimensional subspaces of $H^s(\Omega) \cap L^2_0(\Omega)$.
\end{cor}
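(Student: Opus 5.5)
The corollary follows by transporting the min-max of Proposition~\ref{prop:courant-fisher} along the identification of Remark~\ref{rem:identification}. Consider the map $E: H^s(\Omega) \to \widetilde{H}^s_{\Omega,0}$, $u \mapsto \widetilde{u}$, sending a function to its $s$-minimal extension from Proposition~\ref{prop:minimalextension}.

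The first step is to show that $E$ is linear. By Proposition~\ref{prop:minimalextension}, $\widetilde{u}$ is the unique element of $H^s_{\Omega,0}$ agreeing with $u$ in $\Omega$ and satisfying $\mathcal{N}_s \widetilde{u} = 0$ in $\Omega^c$. This uniqueness is also visible from the explicit formula \eqref{eq:u-outside2}, which is linear in $u$. Since $\mathcal{N}_s$ is linear, $\alpha\widetilde{u} + \beta\widetilde{v}$ coincides with $\alpha u + \beta v$ in $\Omega$ and has vanishing nonlocal normal derivative. Hence $\alpha\widetilde{u} + \beta\widetilde{v} = \widetilde{\alpha u + \beta v}$.

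The map $E$ is injective, since $\widetilde{u} = u$ in $\Omega$. By Remark~\ref{rem:identification} it is also onto $\widetilde{H}^s_{\Omega,0}$, with inverse given by restriction to $\Omega$. The mean-zero condition depends only on values in $\Omega$, so $E$ restricts to a linear bijection
\[
H^s(\Omega) \cap L^2_0(\Omega) \to \widetilde{H}^s_{\Omega,0} \cap L^2_0(\Omega).
\]

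The second step uses that linear bijections preserve dimension of subspaces. Therefore $A \mapsto E(A)$ is a bijection from $\widetilde{M}_n^*$ onto $\widetilde{M}_n$, with inverse $A \mapsto \{u|_\Omega : u \in A\}$. Moreover, for $u \in A$ we have $\|\widetilde{u}\|_{L^2(\Omega)} = \|u\|_{L^2(\Omega)}$. So the Rayleigh quotient of $\widetilde{u}$ appearing in Proposition~\ref{prop:courant-fisher} equals the quotient in the corollary evaluated at $u$.

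It follows that, for every $A \in \widetilde{M}_n^*$, the maximum over $A$ in the corollary equals the maximum over $E(A)$ in Proposition~\ref{prop:courant-fisher}. Taking the minimum over these corresponding collections, the minimum in the corollary equals $\widetilde{\mu}_n^{(s)}$. By Proposition~\ref{prop:courant-fisher}, this is $\mu_n^{(s)}$.

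The attainment of the minimum also transfers: the eigenfunction span $\widetilde{A}$ used in the proof of Proposition~\ref{prop:courant-fisher} corresponds to the space of its restrictions. The only point needing care is the linearity of $E$ in the first step, and the uniqueness part of Proposition~\ref{prop:minimalextension} settles it.
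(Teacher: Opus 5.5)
Your proposal is correct and follows the paper's route: the paper obtains the corollary directly from Proposition~\ref{prop:courant-fisher} through the identification in Remark~\ref{rem:identification}. You make explicit the point the paper leaves implicit, namely that $u \mapsto \widetilde{u}$ is linear, so it carries $n$-dimensional subspaces of $H^s(\Omega)\cap L^2_0(\Omega)$ bijectively onto those of $\widetilde{H}^s_{\Omega,0}\cap L^2_0(\Omega)$. One small attribution point: uniqueness of an extension with $\mathcal{N}_s \widetilde{u}=0$ comes from formula \eqref{eq:u-outside2}, not from the uniqueness of the minimizer in Proposition~\ref{prop:minimalextension}; since you also invoke that formula, the argument stands.
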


\section{Spectral stability for $s \to 1^-$}\label{sec:convergence}

Hereinafter, we let $s \in (0,1)$, $N \geq 2$,  $\Omega \subset \R^N$ be a bounded Lipschitz domain, and $\omega_k$ be the volume of a $k$-dimensional unit ball. 

The aim of this section is to prove the following stability result.
\begin{thm}\label{thm:stable}
	Let $n \in \mathbb{N}$. 
	Then
	\begin{equation}\label{eq:thm:stable:conver}
		\lim_{s \to 1^-}  \mu_n^{(s)} 
		= 
	\mu_n^{(1)}.
	\end{equation}
	Moreover, if $u_n^{(s)}$ is an eigenfunction corresponding to $\mu_n^{(s)}$ and such that $\|u_n^{(s)}\|_{L^2(\Omega)}=1$, then there exists an increasing sequence $\{s_k\}_{k \in \N^*} \subset (0,1)$ with $s_k \to 1^-$ such that
	$$
	\|u_{n}^{(s_k)} - u_n^{(1)}\|_{L^2(\Omega)} \to 0
	\quad \text{as}~ k \to +\infty,  
	$$
	where $u_n^{(1)}$ is an eigenfunction of the local Neumann Laplacian corresponding to $\mu_n^{(1)}$. 
\end{thm}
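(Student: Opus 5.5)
The plan is to prove the two inequalities $\limsup_{s\to1^-}\mu_n^{(s)}\le\mu_n^{(1)}$ and $\liminf_{s\to1^-}\mu_n^{(s)}\ge\mu_n^{(1)}$ separately, using the Bourgain--Brezis--Mironescu (BBM) asymptotics. With $c_{N,s}$ normalized as in \cite{dinezzapalatuccivaldinoci}, one has $c_{N,s}\sim \frac{4N\Gamma(N/2)}{\pi^{N/2}}(1-s)$ as $s\to1^-$. The BBM formula then gives, for $u\in H^1(\Omega)$ on a Lipschitz domain,
$$\frac{c_{N,s}}{2}[u]_{H^s(\Omega)}^2\to\|\nabla u\|_{L^2(\Omega)}^2 .$$
The case $n=0$ is trivial, so we take $n\ge1$.

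\textbf{Upper bound.} Let $A$ be the span of the first $n$ nontrivial local Neumann eigenfunctions $u_1^{(1)},\dots,u_n^{(1)}$, an $n$-dimensional subspace of $H^1(\Omega)\cap L^2_0(\Omega)$. We extend each element to $\mathbb R^N$ by a fixed linear Sobolev extension $E:H^1(\Omega)\to H^1(\mathbb R^N)$; then $EA\in M_n$. By \eqref{eq:decomp:1},
$$[Eu]^2_{H^s_{\Omega,0}}=[u]^2_{H^s(\Omega)}+2\int_\Omega\int_{\Omega^c}\frac{|Eu(x)-Eu(y)|^2}{|x-y|^{N+2s}}\,dx\,dy .$$
The first term is handled by BBM. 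For the cross term, note that $\Omega\times\Omega^c$ is contained in the set of pairs where the segment crosses $\partial\Omega$. Bounding $|Eu(x)-Eu(y)|^2$ by a line integral of $|\nabla Eu|^2$ and using that $\Omega$ is Lipschitz, the factor $(1-s)$ times the cross term tends to $0$. Alternatively, one can use BBM on a neighbourhood $\Omega_\delta$: the pairs in $\Omega\times\Omega^c$ contribute at most $(1-s)[Eu]^2_{H^s(\Omega_\delta)}-(1-s)[u]^2_{H^s(\Omega)}$, plus a negligible far-range part, hence at most $\|\nabla Eu\|^2_{L^2(\Omega_\delta\setminus\Omega)}+o(1)$, which can be made arbitrarily small by letting $\delta\to0$.

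Since $A$ is finite dimensional, this convergence is uniform on its unit sphere: the quadratic forms converge entrywise on a basis. Therefore \eqref{eq:muk-s} yields $\limsup\mu_n^{(s)}\le\max_{A}\text{Rayleigh}=\mu_n^{(1)}$. In particular, $\mu_j^{(s)}$ is bounded for $j\le n$ and $s$ near $1$.

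\textbf{Lower bound and eigenfunction convergence.} Take $L^2$-orthonormal eigenfunctions $u_1^{(s)},\dots,u_n^{(s)}$. By the upper bound, $(1-s)[u_j^{(s)}]^2_{H^s(\Omega)}\le C$. Here I would invoke the compactness theorem of Bourgain--Brezis--Mironescu/Ponce: a family bounded in $L^2(\Omega)$ with $(1-s)[u_s]^2_{H^s(\Omega)}$ bounded is precompact in $L^2(\Omega)$, and its limit points lie in $H^1(\Omega)$ with the $\Gamma$-liminf inequality
$$\|\nabla u\|^2\le\liminf \tfrac{c_{N,s}}2[u_s]^2_{H^s(\Omega)}\le\liminf\tfrac{c_{N,s}}2[u_s]^2_{H^s_{\Omega,0}} .$$
This is the step I expect to be the main obstacle, as it requires the full Ponce compactness on Lipschitz domains.

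Along a sequence $s_k\to1$, we get $u_j^{(s_k)}\to u_j$ in $L^2$ for all $j\le n$, and also $\mu_j^{(s_k)}\to\bar\mu_j$ by boundedness. The limits remain orthonormal with zero mean. By bilinearity and polarization, the liminf inequality applies to any fixed combination $\sum a_ju_j$ approximated by $\sum a_ju_j^{(s_k)}$, whose fractional Rayleigh quotient is at most $\mu_n^{(s_k)}$. Applying this with $\operatorname{span}\{u_1,\dots,u_n\}\in Y_n$ in \eqref{eq:muk-1} gives $\mu_n^{(1)}\le\liminf\mu_n^{(s_k)}$. Choosing $s_k$ to realize the $\liminf$ first, this yields the full convergence \eqref{eq:thm:stable:conver}.

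Finally, to see that the limit $u_n$ is an eigenfunction, we pass to the limit in the weak formulation \eqref{eq:weak} with test function $E\varphi$ for $\varphi\in C^\infty(\overline\Omega)$. Write the bilinear form as a limit via the polarized BBM identity, or use the distributional identity obtained by moving the difference quotient onto $\varphi$, where $(-\Delta)^s\varphi\to-\Delta\varphi$ and the Neumann flux arises in the limit. Alternatively, and more simply, note that $\|\nabla u_n\|^2\le\liminf\mu_n^{(s_k)}=\mu_n^{(1)}$ with $u_n\perp u_1,\dots,u_{n-1}$. Inductively the $u_j$ ($j<n$) are eigenfunctions spanning the eigenspaces below, so $u_n$ attains the Rayleigh minimum on their orthogonal complement and is therefore an eigenfunction for $\mu_n^{(1)}$.
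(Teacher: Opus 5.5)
Your argument is correct, but it is organized differently from the paper's.

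\textbf{The paper's route.} The paper recasts the problem as $\Gamma$-convergence in $L^2_0(\Omega)$ of the functionals $\mathcal{F}_s$, built with the $s$-minimal extension, to $\mathcal{F}_1$.
\begin{itemize}
\item The liminf inequality comes from Ponce's compactness and the cube argument of Brasco--Parini--Squassina.
\item The limsup inequality comes from smooth approximation, Lemma~\ref{lem:liminf}, and lower semicontinuity of the $\Gamma$-upper limit.
\item Convergence of min--max critical values is then imported from the abstract results of Degiovanni--Marzocchi, with Corollary~\ref{cor:courant-fisher} identifying these critical values with eigenvalues.
\item The limit eigenfunction is identified as the minimizer of the $\Gamma$-limit $\mathcal{G}_1$ of the strictly convex functionals $\mathcal{G}_{s_k}$.
\end{itemize}

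\textbf{Your route.} You work directly with \eqref{eq:muk-s} and \eqref{eq:muk-1}. A fixed test space $EA$ gives the upper bound. The $L^2$-limit of the span of $n$ orthonormal fractional eigenfunctions, combined with the liminf inequality, gives the lower bound. An induction on orthogonal complements then identifies the limit eigenfunctions.

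\textbf{Comparison.} Your approach is more elementary and self-contained. It needs neither the genus machinery nor $s$-minimal extensions. Your line-integral bound on the cross term uses only $|\partial\Omega|=0$, whereas Lemma~\ref{lem:liminf} requires $C^\delta$-regularity with $\delta>1/2$ and fractional perimeter estimates. The paper's framework, on the other hand, carries over to nonlinear (Krasnosel'skii-genus) eigenvalue problems. Its $\mathcal{G}_{s_k}$-argument also identifies the limit of an arbitrary prescribed family $u_n^{(s)}$ in one step, without induction.

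\textbf{Three small points.}
\begin{itemize}
\item The statement concerns an arbitrary prescribed normalized family $u_n^{(s)}$. You should say explicitly that you take it as the $n$-th member of your orthonormal system. Then complete it with eigenfunctions $u_1^{(s)},\dots,u_{n-1}^{(s)}$ of $\mu_1^{(s)},\dots,\mu_{n-1}^{(s)}$ orthogonal to it. This is always possible by a dimension count inside the eigenspace of $\mu_n^{(s)}$, and your induction then applies verbatim.
\item Your first suggestion for the eigenfunction part, passing to the limit in \eqref{eq:weak}, is only a sketch, since $u_n^{(s_k)}$ converges merely in $L^2$. Rely on the induction, which is complete.
\item The correct asymptotics are $c_{N,s}\sim\frac{2N\Gamma(N/2)}{\pi^{N/2}}(1-s)$, so your constant is off by a factor $2$. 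This is harmless, because what you actually use is the correctly normalized limit $\frac{c_{N,s}}{2}[u]^2_{H^s(\Omega)}\to\|\nabla u\|^2_{L^2(\Omega)}$.
\end{itemize}
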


The proof of the theorem makes use of the notion of $\Gamma$-convergence (see for instance \cite{braides,dalmaso}), which we will briefly recall in relation to our setting. Let $(X,d)$ be a metric space, and let $\{\mathcal{F}_s\}_{s \in (0,1)}$ be a family of functionals $\mathcal{F}_s : X \to [0,+\infty]$. Similarly, let $\mathcal{F}_1 : X \to [0,+\infty]$ be a functional. We say that the functionals $\mathcal{F}_s$ \emph{$\Gamma$-converge} to $\mathcal{F}_1$ as $s \to 1^-$ if the following conditions are satisfied:
\begin{enumerate}[label={\rm(\roman*)}]
\item\label{liminf} {\it liminf inequality:} for every increasing sequence $\{s_k\}_{k \in \N^*} \subset (0,1)$ with $s_k \to 1^-$ as $k \to +\infty$,  for every $u \in X$, and for every sequence $\{u_{k}\}_{k \in \N^*} \subset X$ converging to $u$ in $X$ as $k \to +\infty$, it holds
\begin{equation}\label{liminf0}
\mathcal{F}_1(u) \leq \displaystyle \liminf_{k \to +\infty} \mathcal{F}_{s_k}(u_{k}).
\end{equation}
\item\label{limsup}	{\it limsup inequality:} for every increasing sequence $\{s_k\}_{k \in \N^*} \subset (0,1)$ with $s_k \to 1^-$ as $k \to +\infty$, and for every $u \in X$, there exists a sequence $\{u_{k}\}_{k \in \N^*} \subset X$ converging to $u$ in $X$ as $k \to +\infty$ such that
\begin{equation}\label{limsup0}
\mathcal{F}_1(u) \geq \displaystyle \limsup_{k \to +\infty} \mathcal{F}_{s_k}(u_{k}).
\end{equation}
\end{enumerate}
According to \cite[Proposition~1.44]{braides}, 
in order to establish \ref{liminf} and \ref{limsup}, and hence to prove the $\Gamma$-convergence as $s \to 1^-$, it is sufficient to justify \eqref{liminf0} and \eqref{limsup0} up to a subsequence of a given sequence of indices.

An equivalent way to define the $\Gamma$-convergence is the following. Denote
\[ \left(\Gamma\!-\!\liminf_{s \to 1^-} \mathcal{F}_s\right)(u) := \inf \left\{ \liminf_{k \to +\infty} \mathcal{F}_{s_k}(u_k)\,\bigg|\, \{s_k\}_{k \in \N^*} \subset (0,1),\,s_k \nearrow 1,\,u_k \to u \text{ as }k \to +\infty \right\},\]
\[ \left(\Gamma\!-\!\limsup_{s \to 1^-} \mathcal{F}_s\right)(u) := \inf \left\{ \limsup_{k \to +\infty} \mathcal{F}_{s_k}(u_k)\,\bigg|\, \{s_k\}_{k \in \N^*} \subset (0,1),\,s_k \nearrow 1,\,u_k \to u \text{ as }k \to +\infty \right\}.\]
Then the liminf inequality \ref{liminf} is equivalent to 
\begin{equation}\label{eq:gamma-inf}
	\mathcal{F}_1(u) \leq \left(\displaystyle \Gamma\!-\!\liminf_{s \to 1^-} \mathcal{F}_s\right)(u) \quad \text{for every }u \in X,
\end{equation}
and the limsup inequality \ref{limsup} is equivalent to
\begin{equation}\label{eq:gamma-sup}
	\mathcal{F}_1(u) \geq \left(\displaystyle \Gamma\!-\!\limsup_{s \to 1^-} \mathcal{F}_s\right)(u)\quad \text{for every }u \in X.
\end{equation}
It turns out that the functionals $\mathcal{F}_s$ $\Gamma$-converge to $\mathcal{F}_1$ as $s \to 1^-$ if and only if, for every $u \in X$,
\[ \mathcal{F}_1 (u) = \left(\Gamma\!-\!\liminf_{s \to 1^-} \mathcal{F}_s\right)(u) = \left( \Gamma\!-\!\limsup_{s \to 1^-} \mathcal{F}_s\right)(u),\] 
see \cite[Section~1.6]{braides}. 
In this case, we write
\begin{equation}\label{eq:gammaconv}
\mathcal{F}_1 (u) = \left(\Gamma\!-\!\lim_{s \to 1^-} \mathcal{F}_s\right) (u)
\quad \text{for every}~ u \in X.
\end{equation}

Prior to the proof of Theorem \ref{thm:stable}, we establish three auxiliary results. 

\begin{lem}
	\label{lem:limsup}
	Let $s_0 \in (0,1)$, and let $\{u_s\}_{s \in (s_0,1)}$ be a family of functions such that $u_s \in H^s(\Omega) \cap L^2_0(\Omega)$ for any $s \in (s_0,1)$. Let $\widetilde{u}_s \in H^s_{\Omega,0}$ be the $s$-minimal extension of $u_s$.	Assume that there exists $C>0$ such that
	\begin{equation}\label{eq:lem:comp:assumption:1}
		(1-s) \iint_{\mathbb{R}^{2N}\setminus (\Omega^c)^2}\frac{|\widetilde{u}_s(x)-\widetilde{u}_s(y)|^2}{|x-y|^{N+2s}}\,dx\,dy \leq C \quad \text{for every}~ s \in (s_0,1).
	\end{equation}
	Then there exists an increasing sequence $\{s_k\}_{k \in \N^*} \subset (s_0,1)$ with $s_k \to 1$ as $k \to +\infty$, and a function $u \in H^1(\Omega) \cap L^2_0(\Omega)$ such that $u_{s_k} \to u$ in $L^2(\Omega)$ as $k \to +\infty$. 
\end{lem}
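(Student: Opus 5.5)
The plan is to reduce the statement to the Bourgain--Brezis--Mironescu compactness theorem for the standard Gagliardo seminorm on $\Omega$. By \eqref{eq:decomp:1}, the $H^s(\Omega)$-seminorm of $u_s$ is controlled by the $H^s_{\Omega,0}$-seminorm of its extension:
\[
(1-s)[u_s]_{H^s(\Omega)}^2 \leq (1-s)[\widetilde{u}_s]_{H^s_{\Omega,0}}^2 \leq C \quad \text{for every } s\in(s_0,1).
\]
Thus the hypothesis \eqref{eq:lem:comp:assumption:1} gives a uniform bound on $(1-s)$ times the interior Gagliardo energy, and the exterior part of the norm can be discarded.

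Next we need a uniform $L^2(\Omega)$ bound, since the BBM compactness result (e.g.\ \cite{BBM} or Ponce's version, valid on bounded Lipschitz, hence extension, domains) requires $\{u_s\}$ bounded in $L^2(\Omega)$ together with $(1-s)[u_s]^2_{H^s(\Omega)}\le C$. The statement does not explicitly assume $L^2$ boundedness, so I would obtain it from the zero-mean condition via a Poincaré inequality uniform in $s$ near $1$:
\[
\int_\Omega |v-\bar v|^2\,dx \leq C_P\,(1-s)\int_\Omega\int_\Omega \frac{|v(x)-v(y)|^2}{|x-y|^{N+2s}}\,dx\,dy
\]
for $s\in(s_0,1)$, with $C_P$ independent of $s$. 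This is Ponce's uniform fractional Poincaré inequality (the nonlocal Poincaré inequality of Ponce, 2004) applied with the kernels $\rho_s(h)=(1-s)|h|^{-N-2s+2}\chi_{\{|h|\le \operatorname{diam}\Omega\}}$ suitably normalized. These kernels form an approximate identity as $s\to1^-$. Indeed, $\int_{|h|<R}(1-s)|h|^{2-N-2s}dh=(1-s)|S^{N-1}|R^{2-2s}/(2-2s)\to |S^{N-1}|/2$, and the mass concentrates at $0$. If one prefers a self-contained route, the uniform Poincaré inequality can be proved by contradiction: a normalized zero-mean sequence with vanishing energy would be compact by BBM, and its limit would be a constant with zero mean and unit norm, which is impossible. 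This step, ensuring the $L^2$ bound, is the main point needing care, though the statement may implicitly intend $\|u_s\|_{L^2}=1$; in that case it is automatic.

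With both bounds in hand, I would pick any sequence $s_k\nearrow1$ and apply BBM compactness (Ponce, Theorem 1.2/1.3) to obtain a subsequence, not relabeled, with $u_{s_k}\to u$ in $L^2(\Omega)$. BBM also gives $u\in H^1(\Omega)$, with $\|\nabla u\|^2_{L^2}\le K_N\liminf_k (1-s_k)[u_{s_k}]^2_{H^{s_k}(\Omega)}<\infty$. Finally, $L^2$ convergence on the bounded set $\Omega$ implies $L^1$ convergence, so $\int_\Omega u\,dx=\lim\int_\Omega u_{s_k}\,dx=0$. Hence $u\in H^1(\Omega)\cap L^2_0(\Omega)$.
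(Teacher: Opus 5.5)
Your proposal is correct and follows the paper's argument. You combine the bound $(1-s)[u_s]_{H^s(\Omega)}^2 \le C$ with Ponce's uniform Poincaré--Wirtinger inequality (kernels $\propto (1-s)|h|^{2-N-2s}$, i.e.\ $\varepsilon = 2(1-s)$) to get a uniform $L^2(\Omega)$ bound from the zero-mean condition; you then apply Ponce's compactness theorem to obtain an $L^2$-convergent increasing sequence with limit in $H^1(\Omega)$, and pass the zero mean to the limit. Your explicit remark that the exterior part of the seminorm can be dropped via \eqref{eq:decomp:1} simply spells out a step the paper leaves implicit.
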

\begin{proof}
	The Poincar\'e-Wirtinger inequality given by  \cite[Theorem~1.1]{ponce} implies that, for every $\varepsilon_0 > 0$, there exists a constant $C=C(\Omega,\varepsilon_0) > 0$ such that, for every $\varepsilon \in (0,\varepsilon_0)$,
	\begin{equation}\label{eq:PW1}
		\int_\Omega u^2 \,dx 
		\leq 
		C 
		\int_\Omega \int_\Omega \frac{|u(x)-u(y)|^2}{|x-y|^2} \rho_\varepsilon(|x-y|) \,dx\,dy \quad \text{for every }u \in L^2_0(\Omega),
	\end{equation}
where
	$$
	\rho_\varepsilon(t) 
	=
	\begin{cases}
		\displaystyle \frac{\varepsilon}{N \omega_N |t|^{N-\varepsilon}} &\text{for}~ |t|<1,\\
		0 &\text{for}~ |t| \geq 1.	
	\end{cases}
	$$ 
	Taking $\varepsilon_0 = 2(1-s_0)$ and $\varepsilon = 2(1-s)$, the inequality \eqref{eq:PW1} turns to
	\begin{equation}\label{eq:PW2}
		\int_\Omega u^2 \,dx
		\leq 
		C \, \frac{2(1-s)}{N \omega_N}
		\int_\Omega \int_\Omega \frac{|u(x)-u(y)|^2}{|x-y|^{N+2s}} \,dx\,dy
	\end{equation}
	for any $u \in L^2_0(\Omega)$ and $s \in (s_0, 1)$. 
	Applying \eqref{eq:PW2} to the family $\{u_s\}_{s \in (s_0,1)}$ given in the statement, we deduce that it is bounded in $L^2(\Omega)$. 
	Further applying \cite[Theorem~1.2]{ponce}, we conclude that $\{u_s\}_{s \in (s_0,1)}$ is relatively compact in $L^2(\Omega)$. 
	That is, there exists an increasing sequence $\{s_k\}_{k \in \mathbb{N}^*}$ such that $s_k \to 1^-$ and $\{u_{s_k}\}_{k \in \mathbb{N}^*}$ converges in $L^2(\Omega)$ to some $u \in L^2(\Omega)$ as $k \to +\infty$. 
	Clearly, we also have $\int_\Omega u \,dx = 0$, i.e., $u \in L^2_0(\Omega)$. 
	Moreover, \cite[Theorem~1.2]{ponce} yields $u \in H^1(\Omega)$. 	
	The proof is complete.
\end{proof}

\begin{lem}\label{lem:liminf}
	Let $u \in C^{\delta}(\mathbb{R}^N) \cap H^1(\Omega)$ for some $\delta \in (1/2,1)$. 
	Then $u \in H_{\Omega,0}^s$ for any $s \in (\delta,1)$ and 
	\[ 
	\lim_{s \to 1^-} (1-s)
	\iint_{\mathbb{R}^{2N}\setminus (\Omega^c)^2}\frac{|u(x)-u(y)|^2}{|x-y|^{N+2s}}\,dx\,dy
	=
	\frac{\omega_{N-1}}{2N} 
	\int_\Omega |\nabla u|^2 \,dx.
	\]
\end{lem}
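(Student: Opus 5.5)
Using \eqref{eq:decomp:1}, I split the seminorm into the interior part
\[
I_s(u):=\int_\Omega\int_\Omega \frac{|u(x)-u(y)|^2}{|x-y|^{N+2s}}\,dx\,dy
\]
and the cross part
\[
2J_s(u):=2\int_\Omega\int_{\Omega^c}\frac{|u(x)-u(y)|^2}{|x-y|^{N+2s}}\,dx\,dy .
\]
The plan has two claims. First, $(1-s)I_s(u)\to \frac{\omega_{N-1}}{2N}\int_\Omega|\nabla u|^2\,dx$. Second, $(1-s)J_s(u)\to 0$. Membership $u\in H^s_{\Omega,0}$ for $s\in(\delta,1)$ will come out along the way from finiteness of both pieces.

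For the interior part I invoke the Bourgain--Brezis--Mironescu formula on a bounded Lipschitz (hence extension) domain. For $u\in H^1(\Omega)$ it reads
\[
\lim_{s\to1^-}(1-s)\,I_s(u)=K_N\int_\Omega|\nabla u|^2\,dx .
\]
Here $K_N=\frac12\int_{S^{N-1}}|e\cdot\sigma|^2\,d\sigma$, and the factor $\frac12$ comes from $\int_0^1 r^{1-2s}\,dr=\frac1{2(1-s)}$. A short computation gives $\int_{S^{N-1}}\sigma_1^2\,d\sigma=|S^{N-1}|/N=\omega_N$. So I should check that this matches the stated constant $\frac{\omega_{N-1}}{2N}$. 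That depends on the convention for $\omega_k$, and I would reconcile it with the form of the BBM constant used in \cite{ponce}, which has the same normalization as in the proof of Lemma~\ref{lem:limsup}. In particular, the same family $\rho_\varepsilon$ with $\varepsilon=2(1-s)$ yields the kernel $\frac{2(1-s)}{N\omega_N}|x-y|^{-N-2s}\chi_{\{|x-y|<1\}}$. Ponce's Theorem~1.2 then gives
\[
\int_\Omega\int_\Omega\frac{|u(x)-u(y)|^2}{|x-y|^2}\,\rho_\varepsilon\,dx\,dy\to K\int_\Omega|\nabla u|^2\,dx ,
\]
with his explicit $K_{2,N}$. The part of $I_s(u)$ with $|x-y|\ge1$ is bounded by $4\|u\|_\infty^2|\Omega|^2$, since $u$ is bounded on $\overline\Omega$, and so it vanishes after multiplying by $(1-s)$. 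Finiteness of $I_s(u)$ for each $s<1$ follows from $H^1(\Omega)\subset H^s(\Omega)$ on Lipschitz domains, or directly from the Hölder bound below.

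For the cross part I use the Hölder continuity. Since $|u(x)-u(y)|\le [u]_{C^\delta}|x-y|^\delta$ and $u$ is globally bounded,
\[
J_s(u)\le C\int_\Omega\int_{\Omega^c}\min\{|x-y|^{2\delta},1\}\,|x-y|^{-N-2s}\,dy\,dx .
\]
The region $|x-y|\ge1$ contributes a bounded amount. On $|x-y|<1$, I use that $\Omega$ is Lipschitz, so $y\in\Omega^c$ forces $|x-y|\ge d(x):=\operatorname{dist}(x,\partial\Omega)$. Then
\[
\int_{\Omega^c\cap\{|x-y|<1\}}|x-y|^{2\delta-N-2s}\,dy\le C\int_{d(x)}^1 r^{2\delta-2s-1}\,dr\le C\,d(x)^{2\delta-2s}.
\]
This is integrable over $\Omega$ as long as $2s-2\delta<1$, which holds because $\delta>1/2$. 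Uniformity is the point: $|\{x\in\Omega: d(x)<t\}|\le Ct$ for Lipschitz $\Omega$, and $2(s-\delta)\le 2(1-\delta)<1$, so
\[
\int_\Omega d(x)^{-2(s-\delta)}\,dx\le C
\]
uniformly in $s\in(\delta,1)$. Hence $J_s(u)$ is bounded uniformly as $s\to1^-$, so $(1-s)J_s(u)\to0$, and also $u\in H^s_{\Omega,0}$.

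Combining the two claims gives the stated limit. The main obstacle is the interior step: making sure the BBM constant matches $\frac{\omega_{N-1}}{2N}$ exactly with the paper's normalization. I would settle it by directly evaluating Ponce's constant for the kernel above rather than relying on memory. The cross-term estimate is routine once the layer-volume bound for Lipschitz domains is available.
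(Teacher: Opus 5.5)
Your argument is correct and has the same structure as the paper's proof. Both use the splitting \eqref{eq:decomp:1}, the Bourgain--Brezis--Mironescu limit for the interior term (with $I_s(u)<\infty$ from $H^1(\Omega)\subset H^s(\Omega)$), and a cross-term bound that is uniform as $s\to1^-$ and comes from $|u(x)-u(y)|\le[u]_{C^\delta}|x-y|^\delta$.

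The one step done differently is that uniform bound. The paper applies the Hölder estimate on all of $\Omega\times\Omega^c$ and recognizes $\int_\Omega\int_{\Omega^c}|x-y|^{-N-2(s-\delta)}\,dx\,dy$ as half the fractional $2(s-\delta)$-perimeter of $\Omega$. It then quotes the bound of \cite[Corollary~4.4]{BLP} in terms of $|\Omega|$ and $|\partial\Omega|$, which needs only the Hölder seminorm and no near/far split. You instead reprove what is needed by hand from $|x-y|\ge d(x)$ and the layer estimate $|\{d<t\}|\le Ct$. Your version is more self-contained and equally valid.

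Two small remarks.

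First, your alternative justification ``or directly from the Hölder bound'' for $I_s(u)<\infty$ is wrong when $s>\delta$. The function $|x-y|^{2\delta-N-2s}$ is not integrable near the diagonal of $\Omega\times\Omega$. The Hölder bound works for the cross term only because $|x-y|\ge d(x)$ keeps you off the diagonal, so for $I_s$ you do need the embedding.

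Second, your ``main obstacle'' is not a real one. Your constant $\frac12\int_{S^{N-1}}\sigma_1^2\,d\sigma=\frac{|S^{N-1}|}{2N}$ is right, and evaluating Ponce's normalization gives the same value. It equals the stated $\frac{\omega_{N-1}}{2N}$ exactly when $\omega_{N-1}$ denotes the surface measure of $S^{N-1}$. That is the convention of \cite{dinezzapalatuccivaldinoci}, and it also underlies \eqref{eq:cNs-limit2}. Under the paper's declared convention ($\omega_k$ the volume of the unit $k$-ball, as used in $\rho_\varepsilon$), the literal constant would be off: for $N=2$ you get $\pi/2$ versus $1/2$. This is a harmless notational inconsistency in the paper, not a gap in your argument, since the same symbol cancels between the lemma and \eqref{eq:cNs-limit2} in the proof of Theorem~\ref{thm:stable}.
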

\begin{proof}
	The result is established in \cite[Proposition~5.1]{dipierrorosotonvaldinoci} (see, more precisely, \cite[Eq.~(5.3)]{dipierrorosotonvaldinoci}) for  $u \in C_c^{2}(\mathbb{R}^N)$ and it is based on the results of \cite{BBM} and \cite{dinezzapalatuccivaldinoci}. 
	Let us provide a sketch of the proof which weakens the assumption on $u$ and fixes an imprecision in the derivation of \cite[Eq.~(5.5)]{dipierrorosotonvaldinoci}. 

	For any $u \in C^{\delta}(\mathbb{R}^N)$ with $\delta \in (1/2,1)$ and for any $s \in (\delta,1)$, we have
	$$
		\int_{\Omega} \int_{\Omega^c} \frac{|u(x)-u(y)|^2}{|x-y|^{N+2 s}} \,dx\,dy \leq \|u\|_{C^{\delta}(\mathbb{R}^N)}^2 \int_{\Omega} \int_{\Omega^c} \frac{1}{|x-y|^{N+2(s - \delta)}} \,dx\,dy
		=
		\frac{1}{2}\|u\|_{C^{\delta}(\mathbb{R}^N)}^2
		P_{2(s-\delta)}(\Omega),
	$$
where $P_{2(s-\delta)}(\Omega)$ is the fractional $2(s-\delta)$-perimeter of $\Omega$, see \cite[Section~4]{BLP}. 
We fix $\varepsilon \in (0,1-\delta)$, and note that $2(s-\delta) \in (\varepsilon,2(1-\delta)) \Subset (0,1)$ for any $s \in (\delta+\varepsilon,1)$. We now apply \cite[Corollary~4.4]{BLP} to obtain
$$
P_{2(s-\delta)}(\Omega)
\leq
\frac{2^{1-2(s-\delta)} N \omega_N}{(1-2(s-\delta)) \cdot 2(s-\delta)} |\partial \Omega|^{2(s-\delta)} |\Omega|^{1-2(s-\delta)}
< C < +\infty,
$$
where $C = C(\Omega,\delta,\varepsilon)>0$ does not depend on $s \in (\delta+\varepsilon,1)$. 
Therefore, using the decomposition \eqref{eq:decomp:1} and the embedding $H^1(\Omega) \subset H^s(\Omega)$ (see  \cite[Proposition~2.2]{dinezzapalatuccivaldinoci}), we get $u \in H_{\Omega,0}^s$ for any $s \in (\delta,1)$. Moreover, using the convergence result from \cite{BBM}, we arrive at
\begin{align}
	\lim _{s \to 1^-} & (1-s) \iint_{\mathbb{R}^{2 N} \backslash(\Omega^c)^2} \frac{|u(x)-u(y)|^2}{|x-y|^{N+2 s}} d x d y \\
	\label{eq:prop2:1}
	& =\lim _{s \rightarrow 1^-}(1-s) \int_{\Omega}\int_{\Omega} \frac{|u(x)-u(y)|^2}{|x-y|^{N+2 s}} d x d y
	=
	C(N) 
	\int_{\Omega}|\nabla u|^2 \,dx.
\end{align}
The explicit constant $C(N) = \frac{\omega_{N-1}}{2N}$ is derived as in the proof of \cite[Proposition~5.1]{dipierrorosotonvaldinoci} using the results of \cite{dinezzapalatuccivaldinoci}. 
\end{proof}

Recall that, for $s \in (0,1)$ and $u \in H^s(\Omega)$, we denote by $\widetilde{u} \in \widetilde{H}^s_{\Omega,0}$ the $s$-minimal extension of $u$ (see Proposition \ref{prop:minimalextension}). 
We define, for $s \in (0,1)$, the family of functionals $\mathcal{F}_s: L^2_0(\Omega) \to [0,+\infty]$ as
\[
\mathcal{F}_s(u)\;=\;
\begin{cases}
	(1-s)
	\displaystyle\iint_{\mathbb{R}^{2N}\setminus (\Omega^c)^2}\frac{|\widetilde{u}(x)-\widetilde{u}(y)|^2}{|x-y|^{N+2s}}\,dx\,dy\,,& u\in H^s(\Omega),\\[4mm]
	+\infty,& \text{otherwise},
\end{cases}
\]
and the functional $\mathcal{F}_1: L^2_0(\Omega) \to [0,+\infty]$ as
\[
\mathcal{F}_1(u)\;=\;
\begin{cases}
	\displaystyle \frac{\omega_{N-1}}{2N} \int_{\Omega}|\nabla u|^2\,dx\,,& u\in H^1(\Omega),\\[2mm]
	+\infty,& \text{otherwise.}
\end{cases}
\]

\begin{prop} \label{prop:gammaconv}
It holds	
\[ \mathcal{F}_1 (u) = \left(\Gamma\!-\!\lim_{s \to 1^-} \mathcal{F}_s\right) (u)\]
for every $u \in L^2_0(\Omega)$.
\end{prop}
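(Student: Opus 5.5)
We establish the two inequalities \eqref{eq:gamma-inf} and \eqref{eq:gamma-sup} separately, working with $X = L^2_0(\Omega)$ endowed with the $L^2(\Omega)$ distance. By \cite[Proposition~1.44]{braides}, it suffices to verify each inequality along a subsequence of any given sequence $s_k \nearrow 1$.

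\emph{Liminf inequality.} Let $s_k \nearrow 1$ and $u_k \to u$ in $L^2_0(\Omega)$. We may assume that $\liminf_k \mathcal{F}_{s_k}(u_k) < +\infty$; passing to a subsequence, the liminf is then a limit and $\mathcal{F}_{s_k}(u_k) \le C$. Since $\widetilde{u}_k$ agrees with $u_k$ in $\Omega$, the decomposition \eqref{eq:decomp:1} gives
\[
(1-s_k)\int_\Omega\int_\Omega \frac{|u_k(x)-u_k(y)|^2}{|x-y|^{N+2s_k}}\,dx\,dy \le \mathcal{F}_{s_k}(u_k).
\]
The Bourgain--Brezis--Mironescu/Ponce liminf result (\cite{BBM}, \cite[Theorem~1.2]{ponce}, with mollifiers $\rho_\varepsilon$, $\varepsilon=2(1-s_k)$, as in the proof of Lemma~\ref{lem:limsup}) yields $u \in H^1(\Omega)$ and
\[
\frac{\omega_{N-1}}{2N}\int_\Omega|\nabla u|^2\,dx \le \liminf_k (1-s_k)\int_\Omega\int_\Omega \frac{|u_k(x)-u_k(y)|^2}{|x-y|^{N+2s_k}}\,dx\,dy .
\]
The normalization constant must match the one in Lemma~\ref{lem:liminf}; this is checked by the same explicit computation as there. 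Discarding the nonnegative cross term $\Omega\times\Omega^c$ therefore gives \eqref{liminf0}.

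\emph{Limsup inequality.} Let $u \in L^2_0(\Omega)$. If $u \notin H^1(\Omega)$, there is nothing to prove. Otherwise, we first treat $u$ in the dense class $D := \{ v|_\Omega : v \in C^\infty_c(\R^N)\} \cap L^2_0(\Omega)$. Such a $v$ lies in $C^\delta(\R^N)\cap H^1(\Omega)$ for every $\delta\in(1/2,1)$. We take the constant recovery sequence $u_k = u$. By the minimality in Proposition~\ref{prop:minimalextension}, $[\widetilde{u}]_{H^s_{\Omega,0}} \le [v]_{H^s_{\Omega,0}}$, so $\mathcal{F}_{s}(u) \le (1-s)[v]^2_{H^s_{\Omega,0}}$. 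Lemma~\ref{lem:liminf} then gives $\limsup_{s\to1^-}\mathcal{F}_s(u) \le \mathcal{F}_1(u)$. This is precisely where the $s$-minimal extension pays off: the nonlocal Neumann energy is dominated by that of any extension, in particular a smooth one.

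For general $u \in H^1(\Omega)\cap L^2_0(\Omega)$, we use density. Since $\Omega$ is Lipschitz, $C^\infty_c(\R^N)$ restrictions are dense in $H^1(\Omega)$, and subtracting means preserves the zero-mean constraint. Hence there exist $v_j \in D$ with $v_j \to u$ in $H^1(\Omega)$, so $\mathcal{F}_1(v_j)\to\mathcal{F}_1(u)$. The $\Gamma$-limsup is lower semicontinuous in $L^2$ (\cite[Section~1.6]{braides}), so
\[
\left(\Gamma\!-\!\limsup_{s\to1^-}\mathcal{F}_s\right)(u) \le \liminf_j \left(\Gamma\!-\!\limsup_{s\to1^-}\mathcal{F}_s\right)(v_j) \le \lim_j \mathcal{F}_1(v_j) = \mathcal{F}_1(u).
\]
Alternatively, a diagonal argument produces an explicit recovery sequence.

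The main obstacle I expect is the liminf step. One must make sure that the cited BBM/Ponce $\Gamma$-liminf statement applies to varying functions $u_k$ with the varying kernels $(1-s)|x-y|^{-N-2s}$ restricted to $\Omega\times\Omega$, rather than only to radial kernels truncated at $|t|<1$. To handle this, I would split $\Omega\times\Omega$ into $\{|x-y|<1\}$ and its complement. On the near part, I would compare with $\rho_{2(1-s)}$, up to the factor $2N\omega_N$ built into $\rho_\varepsilon$. On the far part, the contribution is bounded by $(1-s)\cdot 4\|u_k\|_{L^2}^2|\Omega|$ and hence tends to $0$, so it is negligible.
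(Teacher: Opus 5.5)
Your proposal is correct and follows essentially the paper's route. For the liminf you discard the nonnegative $\Omega\times\Omega^c$ cross term and invoke a known Bourgain--Brezis--Mironescu/Ponce-type liminf for $(1-s_k)[u_k]^2_{H^{s_k}(\Omega)}$; the paper instead first gets $u\in H^1(\Omega)$ from Lemma~\ref{lem:limsup} and then runs the Brasco--Parini--Squassina cube argument. For the limsup you use, exactly as the paper does, the minimality of the $s$-minimal extension, Lemma~\ref{lem:liminf} on smooth zero-mean approximants, and lower semicontinuity of the $\Gamma$-upper limit.

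One harmless slip: with $\varepsilon=2(1-s)$ one has $(1-s)|x-y|^{-N-2s}=\frac{N\omega_N}{2}\,\rho_\varepsilon(|x-y|)\,|x-y|^{-2}$ for $|x-y|<1$. The factor is therefore $N\omega_N/2$, not $2N\omega_N$, which does not affect your argument since you fix the constant by the explicit computation.
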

\begin{proof}
In order to prove the $\Gamma$-convergence of $\mathcal{F}_s$ to $\mathcal{F}_1$ as $s \to 1^-$, we need to prove the liminf inequality \eqref{eq:gamma-inf}, and the limsup inequality \eqref{eq:gamma-sup}.
	
	\textbf{liminf inequality.} 
	Let $\{s_k\}_{k \in \N^*} \subset (0,1)$ be increasing and converging to $1$, let $u \in L^2_0(\Omega)$, and let $\{u_k\}_{k \in \N^*} \subset L^2_0(\Omega)$ be a sequence converging to $u$ in $L^2(\Omega)$.
	Observe that if
	$$
	\liminf_{k \to +\infty} \mathcal{F}_{s_k}(u_k)
	=
	+\infty,
	$$
	then there is nothing to prove. 
	Therefore, assume that
	\begin{equation}\label{eq:liminf:pr1}
	\liminf_{k \to +\infty} \mathcal{F}_{s_k}(u_k)
	< +\infty. 
	\end{equation}
	By the definition of $\mathcal{F}_s$, this	means that there exists a constant $C>0$ and a subsequence of $\{s_k\}_{k \in \N^*}$ (still denoted by the same index), such that $u_k \in H^{s_k}(\Omega)$ and
	$$
	(1-s_k) \iint_{\mathbb{R}^{2N}\setminus (\Omega^c)^2}\frac{|\widetilde{u}_k(x)-\widetilde{u}_k(y)|^2}{|x-y|^{N+2s_k}}\,dx\,dy \leq C
	$$
	for any $k \in \N^*$. Therefore, Lemma~\ref{lem:limsup} implies that $\{u_k\}_{k \in \N^*}$ converges in $L^2(\Omega)$ to some $u^* \in H^1(\Omega) \cap L^2_0(\Omega)$ as $k \to +\infty$, and hence $u^* \equiv u$. That is, $u \in H^1(\Omega) \cap L^2_0(\Omega)$. 
	
	Now we can argue verbatim as in the proof of \cite[Proposition 3.11]{brascoparinisquassina} with $p=2$ to obtain the chain of inequalities 
	\begin{align}
		\frac{\omega_{N-1}}{2N} \int_\Omega |\nabla u|^2 \,dx
		&\leq
		(1+\varepsilon) \liminf_{k \to +\infty} \sum_{i \in I} (1-s_k) [u_k]_{H^{s_k}(C_i)}^2\\
		&\leq (1+\varepsilon) \liminf_{k \to +\infty} (1-s_k) [u_k]_{H^{s_k}(\Omega)}^2
		\\
		&\leq
		(1+\varepsilon) \liminf_{k \to +\infty} (1-s_k)\iint_{\mathbb{R}^{2N}\setminus (\Omega^c)^2}\frac{|\widetilde{u}_k(x)-\widetilde{u}_k(y)|^2}{|x-y|^{N+2s_k}}\,dx\,dy,
	\end{align} 
	where $\varepsilon>0$ is arbitrary, and $\{C_i\}_{i \in I}$ is an appropriate family of disjoint cubes such that each $C_i \subset \Omega$. 
	
	This yields the $\Gamma\!-\!\liminf$ inequality \eqref{liminf0} for a subsequence of any sequence $\{s_k\}_{k \in \N^*}$ converging to $1^-$. 
	As we noted after \eqref{limsup0}, this fact is sufficient to get \eqref{eq:gamma-inf}.
	
	\medskip
	\textbf{limsup inequality.}
	Clearly, if $u \not\in H^1(\Omega)$, then there is nothing to prove. 
	Assume that $u \in H^1(\Omega)$. 
	Since $\Omega$ is Lipschitz, there exists a sequence $\{v_j\}_{j \in \N^*} \subset C_c^\infty(\mathbb{R}^N)$ 
	such that $v_j \to u$ in $H^1(\Omega)$. 
	Consider the functions
	$$
	u_j = \left(v_j - \frac{1}{|\Omega|} \int_\Omega v_j \,dx\right) \cdot f,
	\quad j \in \N^*,
	$$
	where $f \in C_c^\infty(\mathbb{R}^N)$ is such that $f = 1$ in a neighborhood of $\Omega$. 
	We deduce that $u_j \in C_c^\infty(\mathbb{R}^N) \cap L^2_0(\Omega)$ for any $j \in \N^*$, and 
	$u_j \to u$ in $H^1(\Omega)$ as $j \to +\infty$. 
	
	Let  $\{s_k\}_{k \in \N^*} \subset (0,1)$ be an increasing sequence converging to $1$. 
	For each fixed $u_j$, Lemma~\ref{lem:liminf} yields
	\[ 
	\lim_{k \to +\infty} (1-s_k)
	\iint_{\mathbb{R}^{2N}\setminus (\Omega^c)^2}\frac{|u_j(x)-u_j(y)|^2}{|x-y|^{N+2s_k}}\,dx\,dy
	=
	\frac{\omega_{N-1}}{2N} \int_\Omega |\nabla u_j|^2 \,dx.
	\]
	For each $k \in \N^*$, we denote $\widetilde{u}_{j,k}$ be the $s_k$-minimal extension of $u_j$. 
	Using Proposition~\ref{prop:minimalextension}, we obtain
	\begin{align*} 
	\limsup_{k \to +\infty} \mathcal{F}_{s_k}(u_j)  & = 	
	\limsup_{k \to +\infty} \, (1-s_k)
	\iint_{\mathbb{R}^{2N}\setminus (\Omega^c)^2}\frac{|\widetilde{u}_{j,k}(x)-\widetilde{u}_{j,k}(y)|^2}{|x-y|^{N+2s_k}}\,dx\,dy \\ & \leq
	\limsup_{k \to +\infty} \, (1-s_k)
	\iint_{\mathbb{R}^{2N}\setminus (\Omega^c)^2}\frac{|u_j(x)-u_j(y)|^2}{|x-y|^{N+2s_k}}\,dx\,dy
	\\ & =
	\frac{\omega_{N-1}}{2N} \int_\Omega |\nabla u_j|^2 \,dx = \mathcal{F}_1(u_j).
	\end{align*}
	Thus, we have shown that the $\Gamma\!-\!\limsup$ inequality \eqref{limsup0} holds true for every $u_j$.	Noting now that
	$u_j \to u$ in $H^1(\Omega)$ as $j \to +\infty$, and since the $\Gamma$-upper limit is lower semicontinuous (see, e.g., \cite[Proposition~1.28]{braides}), we arrive at
	\[ \left(\Gamma\!-\!\limsup_{s \to 1^-} \mathcal{F}_{s}\right)(u) 
	\leq 
	\liminf_{j \to +\infty} \left(\Gamma\!-\!\limsup_{s \to 1^-} \mathcal{F}_{s}\right)(u_j) \leq \liminf_{j \to +\infty} \mathcal{F}_1(u_j) = \mathcal{F}_1(u). \]
	That is, the $\Gamma\!-\!\limsup$ inequality \eqref{eq:gamma-sup} is satisfied.
\end{proof}

Now we are ready to prove Theorem~\ref{thm:stable}.

\begin{proof}[Proof of Theorem \ref{thm:stable}]
	By Proposition \ref{prop:gammaconv}, we know that the functionals $\mathcal{F}_s$ $\Gamma$-converges to $\mathcal{F}_1$ as $s \to 1^-$ in $L^2_0(\Omega)$. 
	The convergence of critical values of $\mathcal{F}_s$ to those of $\mathcal{F}_1$
	as $s \to 1^-$ then follows as in \cite[Section~4.1]{brascoparinisquassina}, by exploiting \cite[Corollary~4.4]{degiovannimarzocchi}\footnote{The work \cite{degiovannimarzocchi} actually deals with the spectrum of nonlinear operators defined by means of the \emph{Krasnosel'skii genus}. However, in the linear case $p=2$, this notion coincides with the spectrum obtained by the classical Courant-Fisher theory, see, e.g., \cite[Appendix~A]{brascoparinisquassina} for the case of the Dirichlet boundary conditions.}. 
	Moreover, thanks to Remark~\ref{rem:identification} and  Corollary~\ref{cor:courant-fisher}, \cite[Corollary~3.3]{degiovannimarzocchi} implies that these critical values coincide with eigenvalues up to appropriate multipliers.
	Recalling that for any eigenfunction $u_n^{(s)} \in \widetilde{H}_{\Omega,0}^s \cap L^2_0(\Omega)$ corresponding to $\mu_n^{(s)}$ and satisfying $\|u_n^{(s)}\|_{L^2(\Omega)}=1$ it holds 
	\begin{equation}\label{eq:cNs-limit1}
	\mu_n^{(s)} = \frac{c_{N,s}}{2}[u_n^{(s)}]_{H_{\Omega,0}^s}^2 = \frac{c_{N,s}}{2(1-s)} \mathcal{F}_s(u_n^{(s)})
	\end{equation}
	(where the second equality is due to Remark~\ref{rem:identification})
	and that
	\begin{equation}\label{eq:cNs-limit2}
		\frac{c_{N,s}}{2} \sim \frac{2N}{\omega_{N-1}} (1-s)
		\quad \text{as}~ s \to 1^-
	\end{equation}
	(see, e.g.,  \cite[Corollary~4.2]{dinezzapalatuccivaldinoci}), we deduce that
	\begin{equation}\label{eq:cNs-limit3} 
	\lim_{s \to 1^-} \mu_{n}^{(s)} = \mu_n^{(1)}
	\quad \text{for every}~ n \in \N.
	\end{equation}
	
	Let us prove the convergence of eigenfunctions in $L^2(\Omega)$. We follow the approach from \cite[Section~4.2]{brascoparinisquassina}. 
	In view of \eqref{eq:cNs-limit1}, \eqref{eq:cNs-limit2}, \eqref{eq:cNs-limit3}, we get the existence of $s_0 \in (0,1)$ such that  \[ \sup_{s \in (s_0,1)} (1-s)[u_n^{(s)}]_{H_{\Omega,0}^s}^2 < 
	+\infty.\]
	Thanks to Lemma~\ref{lem:limsup}, there exists $\{s_k\}_{k \in \N^*} \subset (s_0,1)$ converging to $1$ such that $\{u_{n}^{(s_k)}\}_{k \in \N^*}$ converges in $L^2(\Omega)$ to some $u_n \in H^1(\Omega) \cap L^2_0(\Omega)$ as $k \to +\infty$. 
	In particular, we also have $\|u_n\|_{L^2(\Omega)}=1$. 
	
	Let us show that $u_n$ is an eigenfunction of the local Neumann Laplacian in $\Omega$. 
	Observe that each $u_n^{(s_k)}$ is a unique minimizer of the strictly convex functional $\mathcal{G}_{s_k}:L^2_0(\Omega) \to \R$ defined as
	$$
	\mathcal{G}_{s_k}(v)
	:=
	\frac{1}{2}\mathcal{F}_{s_k}(v) - \frac{2 (1-s_k) \mu_n^{(s)}}{c_{N,s}} \int_\Omega u_{n}^{(s_k)} v \,dx. 
	$$
	Due to \eqref{eq:cNs-limit2}, \eqref{eq:cNs-limit3}, and the strong convergence of $\{u_n^{(s_k)}\}_{k \in \N^*}$ to $u_n$ in $L^2(\Omega)$, it is not hard to observe by the triangle inequality that the functional
	$$
	v \in L^2_0(\Omega) \mapsto -\frac{2 (1-s_k) \mu_n^{(s)}}{c_{N,s}} \int_\Omega u_n^{(s_k)} v \,dx 
	$$
	continuously converges (in $L^2(\Omega))$ to the functional
	$$
	v \in L^2_0(\Omega) \mapsto - \,\frac{\omega_{N-1}}{2N}\mu_n^{(1)} \int_\Omega u_n v \,dx,
	$$
	see \cite[Definition~4.7]{dalmaso}. 
	Therefore, \cite[Proposition~6.20]{dalmaso} implies that $\mathcal{G}_{s_k}$ $\Gamma$-converges in $L^2(\Omega)$ to the functional $\mathcal{G}_{1}:L^2_0(\Omega) \to \R$ defined as
	$$
	\mathcal{G}_1(v) 
	:=
	\frac{1}{2}	\mathcal{F}_{1}(v) -  \,\frac{\omega_{N-1}}{2N} \mu_n^{(1)}\int_\Omega u_n v \,dx. 
	$$	
	Since $u_n^{(s_k)} \to u_n$ in $L^2_0(\Omega)$ as $k \to +\infty$, $u_n$ must be a minimizer of $\mathcal{G}_1$. 
	Indeed, for every $v \in H^1(\Omega) \cap L^2_0(\Omega)$ there exists a recovery sequence $\{v_k\}_{k \in \N^*} \subset L^2_0(\Omega)$ with $v_k \to v$ in $L^2_0(\Omega)$ as $k \to +\infty$ satisfying the limsup inequality. This implies
	\[ \mathcal{G}_1(u_n) \leq \liminf_{k \to +\infty} \mathcal{G}_{s_k}\left(u_n^{(s_k)}\right) \leq \limsup_{k \to +\infty} \mathcal{G}_{s_k}\left(u_n^{(s_k)}\right) \leq  \limsup_{k \to +\infty} \mathcal{G}_{s_k}(v_k) \leq \mathcal{G}_1(v).\]
	By the strict convexity of $\mathcal{G}_1$, $u_n$ is its unique minimizer. Consequently, $u_n$ is an eigenfunction of the local Neumann Laplacian in $\Omega$ associated to $\mu_n^{(1)}$.
\end{proof}

\section{Symmetry results}\label{sec:symmetry}

The purpose of this section is to prove Theorems~\ref{thm:main1} and \ref{thm:main2}. 
Prior to that, we provide several auxiliary definitions and results related to symmetry properties of eigenfunctions. 

Hereinafter, we let $s \in (0,1)$, $N \geq 2$, and $B \subset \R^N$ be a ball of radius $R>0$ centered at the origin. 

\begin{definition}
	Let $e \in \mathbb{S}^{N-1}$. Let \[\Sigma_e^+:= \{x \in \R^N\,|\,x \cdot e \geq 0\}, \qquad \Sigma_e^-:= \{x \in \R^N\,|\,x \cdot e \leq 0\}, \qquad H_e := \{x \in \R^N\,|\,x \cdot e = 0\}.\] For $x \in \R^N$, let $\sigma_e(x) \in \R^N$ be its symmetric point with respect to $H_e$. For $u : \R^N \to \R$, the \emph{polarization} with respect to $H_e$ is the function $P_e u :\R^N \to \R$ defined as
	\[ (P_a u)(x) := \left\{ \begin{array}{c l} \min\{u(x),u(\sigma_e(x))\} & \text{if }x \in \Sigma_e^+, \\ \max\{u(x),u(\sigma_e(x))\} & \text{if }x \in \Sigma_e^-.
	\end{array}\right.
	\]
\end{definition}
It is well-known that 
\begin{gather}
	\label{eq:polarization-normweak} 
	\|P_e u\|_{L^2(\mathbb{R}^N)}
	=
	\|u\|_{L^2(\mathbb{R}^N)}
	\quad
	\text{for any}~ u \in L^2(\mathbb{R}^N),\\
	\label{eq:polarization-normweak2} 
	\int_{\mathbb{R}^N} P_e u \,dx
	=
	\int_{\mathbb{R}^N} u \,dx
	\quad
	\text{for any}~ u \in L^1(\mathbb{R}^N), 
\end{gather}
see, e.g., \cite[Eq.~(3.7)]{brocksol} applied to $u^+$ and $u^-$, where $u^+ = \max\{u,0\}$ and $u^- = \min\{u,0\}$. 
In particular, we have
\begin{gather}
	\label{eq:polarization-normweak3} 
	\|P_e u\|_{L^2(B)}
	=
	\|P_e (u \, \chi_B)\|_{L^2(\mathbb{R}^N)}
	=
	\|u \, \chi_B\|_{L^2(\mathbb{R}^N)}
	=
	\|u\|_{L^2(B)}
	\quad 
	\text{for any}~ u \in L^2(\mathbb{R}^N),\\
	\label{eq:polarization-normweak4}
	\int_{B} P_e u \,dx
	=
	\int_{\mathbb{R}^N} P_e (u \, \chi_B) \,dx
	=
	\int_{\mathbb{R}^N} u \, \chi_B \,dx
	=
	\int_{B} u \,dx
	\quad
	\text{for any}~ u \in L^1(\mathbb{R}^N).
\end{gather}

\begin{definition}\label{def:foliated}
	A function $u$ is \emph{foliated Schwarz symmetric} with respect to $p \in \mathbb{S}^{N-1}$ if:
	\begin{enumerate}[label={\rm(\roman*)}]
		\item $u$ is axially symmetric with respect to $\R p$, so that
		\[ u(x) = \hat{u}\left( |x|, \arccos\left( \frac{x}{|x|} \cdot p\right)\right)\]
		for some function $\hat{u}:[0,+\infty) \times [0,\pi] \to \R$.
		\item $\hat{u}$ is nonincreasing with respect to the second variable.
	\end{enumerate}
\end{definition}

We now prove that the operation of polarizing a function from $H^s_{B,0}$ decreases its $H^s_{B,0}$-seminorm, analogously to the same property of the Gagliardo seminorm, cf.\ \cite{BE}. 
Moreover, we provide a characterization of the equality cases. 

\begin{prop}\label{prop:polarization}
	Let $e \in \mathbb{S}^{N-1}$. 
	Let $u \in H^s_{B,0}$, and let $P_e u$ be its polarization with respect to $H_e$.
Then $P_e u \in H^s_{B,0}$ and	
\begin{equation}\label{eq:polarization-norm} 
		\iint_{\R^{2N} \setminus (B^c)^2} \frac{|(P_e u)(x)-(P_e u)(y)|^2}{|x-y|^{N+2s}}\,dx\,dy \leq \iint_{\R^{2N} \setminus (B^c)^2} \frac{|u(x)-u(y)|^2}{|x-y|^{N+2s}}\,dx\,dy.
	\end{equation}
Moreover, equality holds in \eqref{eq:polarization-norm} if and only if either $P_e u = u$ or $P_e u = u \circ \sigma_e$ in $\mathbb{R}^N$.
\end{prop}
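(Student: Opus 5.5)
The plan is to reduce everything to a pointwise inequality between pairs of reflected points. Because $B$ is centered at the origin, $\sigma_e(B)=B$, so the reflection maps $B$ and $B^c$ to themselves and preserves $|x-y|$. We split $\R^{2N}$ into the four regions $\Sigma_e^\pm\times\Sigma_e^\pm$; the hyperplane $H_e$ has measure zero and can be ignored. Using $x\mapsto\sigma_e(x)$ and $y\mapsto\sigma_e(y)$, we rewrite the integral over $\R^{2N}\setminus(B^c)^2$ as an integral over $x,y\in\Sigma_e^+$ of a sum of four terms. With $x'=\sigma_e(x)$ and $y'=\sigma_e(y)$, these are
\[
\frac{\chi(x,y)\bigl(|u(x)-u(y)|^2+|u(x')-u(y')|^2\bigr)}{|x-y|^{N+2s}}+\frac{\chi(x,y')\bigl(|u(x)-u(y')|^2+|u(x')-u(y)|^2\bigr)}{|x-y'|^{N+2s}}.
\]
Here $\chi(x,y)=1-\chi_{B^c}(x)\chi_{B^c}(y)$. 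The weight $\chi$ is invariant under simultaneously reflecting both points, so $\chi(x,y)=\chi(x',y')$ and $\chi(x,y')=\chi(x',y)$. For $x,y\in\Sigma_e^+$ we also have $|x-y|\le|x-y'|$.

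Next, fix $x,y\in\Sigma_e^+$ and set $a=u(x)$, $b=u(x')$, $c=u(y)$, $d=u(y')$. Polarization replaces $(a,b)$ by $(\min,\max)$ and $(c,d)$ by $(\min,\max)$. The classical rearrangement inequality, as used in \cite{BE}, says that
\[
|a-c|^2+|b-d|^2\quad\text{does not increase under this sorting},
\]
while the sum $|a-c|^2+|b-d|^2+|a-d|^2+|b-c|^2$ is unchanged. Write $K_1=\chi(x,y)|x-y|^{-N-2s}$ and $K_2=\chi(x,y')|x-y'|^{-N-2s}$. We need $K_1\ge K_2$, but the weights $\chi$ complicate this. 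The only bad case is $\chi(x,y)=0$ and $\chi(x,y')=1$, meaning $x,y\in B^c$ while one of $x,y'$ lies in $B$. Since $B$ is invariant under $\sigma_e$, $y\in B^c$ forces $y'\in B^c$, and then $x\in B^c$ makes $\chi(x,y')=0$. So that case cannot occur, and $K_1\ge K_2$ always.

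Writing the integrand as $(K_1-K_2)(|a-c|^2+|b-d|^2)+K_2\cdot(\text{total})$, we conclude that the pointwise integrand does not increase. Integrating gives \eqref{eq:polarization-norm}, and membership of $P_eu$ in $H^s_{B,0}$ follows together with \eqref{eq:polarization-normweak3}.

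For the equality case, equality of the integrals forces pointwise equality a.e. wherever $K_1>K_2$. This holds whenever $|x-y|<|x-y'|$, that is, for $x,y$ in the open half-space, provided $\chi(x,y)=1$. Pointwise equality there means that $(a-b)(c-d)\ge0$ after sorting fails to be strict, i.e.\ $(u(x)-u(x'))(u(y)-u(y'))\ge0$. Let $E^\pm=\{x\in\Sigma_e^+: \pm(u(x)-u(x'))>0\}$. We need $E^+\times E^-$ to be null in the region where $\chi=1$. If $x\in B$, then every $y$ qualifies. So if $E^+\cap B$ and $E^-$ both have positive measure we get a contradiction, and symmetrically in the other cases. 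The remaining case, with both $E^+$ and $E^-$ contained in $B^c$, is the main obstacle. There $\chi(x,y)=0$ for $x,y\in B^c$, so no information comes from such pairs. I would handle it through the mixed terms $B\times B^c$. If $u$ equals its reflection on $B$, then for $x\in B$ the factor $u(x)-u(x')$ vanishes and gives nothing. In that situation we must show directly that on $B^c$ either $P_eu=u$ or $P_eu=u\circ\sigma_e$, and it may only hold up to such a null-set subtlety. I would try to settle this by checking how equality in the four-term identity constrains $u$ on $B^c$ when $u$ is symmetric on $B$. If necessary, I would restrict to $\widetilde H^s_{B,0}$, where \eqref{eq:u-outside2} determines $u$ on $B^c$ from $u|_B$.

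Finally, one concludes that either $E^-$ is null, giving $P_eu=u$, or $E^+$ is null, giving $P_eu=u\circ\sigma_e$.
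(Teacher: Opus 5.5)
Your proof of the inequality \eqref{eq:polarization-norm} is correct and follows the paper's argument. The paper also folds the integral onto $(\Sigma_e^+)^2$ and proves the four-point inequality. It uses the sets $A_{++},A_{--},A_{+-},A_{-+}$ and the identity $F(x,y)=2(u(\sigma_e(x))-u(x))(u(y)-u(\sigma_e(y)))$, which is your rearrangement step. Your case analysis on the weights is unnecessary. Since $\sigma_e(B)=B$, the function $\chi$ is invariant under reflecting \emph{either} variable, so $\chi(x,y)=\chi(x,y')$ and a single factor $\chi$ can be pulled out.

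The gap is in the equality case.
\begin{itemize}
\item \textbf{The open case.} You correctly reduce to the situation where $u=u\circ\sigma_e$ a.e.\ in $B$ and $E^+,E^-$ are both non-negligible but meet $B$ only in null sets. You then leave this case open, and your last sentence just asserts the dichotomy. The labels there are also swapped: with your convention, $|E^+|=0$ gives $P_eu=u$.
\item \textbf{Why it cannot be closed.} This is not a null-set subtlety: for general $u\in H^s_{B,0}$ the claim is false. Suppose $u=u\circ\sigma_e$ in $B$. In every folded pair with $\chi=1$, one of the couples $(u(x),u(x'))$, $(u(y),u(y'))$ then has equal entries. So the four-term integrand is unchanged by polarization, whatever $u$ does in $B^c$.
\item \textbf{Counterexample.} Take $u=0$ in $B$ and $u=\phi_1+\phi_2$, with nonnegative nonzero $\phi_1\in C_c^\infty(\{x\cdot e>0\}\setminus\overline B)$ and $\phi_2\in C_c^\infty(\{x\cdot e<0\}\setminus\overline B)$, chosen so that $\sigma_e(\operatorname{supp}\phi_2)\cap\operatorname{supp}\phi_1=\emptyset$. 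Then $[u]_{H^s_{B,0}}^2=2\int_{B^c}u^2k\,dy$ with the $\sigma_e$-invariant weight $k(y)=\int_B|x-y|^{-N-2s}\,dx$. Hence equality holds in \eqref{eq:polarization-norm}. Yet $P_eu=\phi_1\circ\sigma_e+\phi_2$ is neither $u$ nor $u\circ\sigma_e$.
\item \textbf{The paper has the same hole.} Its second and third bullets deduce from $|A_{+-}|=0$ (resp.\ $|A_{-+}|=0$) that one sign set is negligible. But $A_{\pm\mp}\subset\widetilde\Sigma$ contains no pairs from $B^c\times B^c$, so that deduction fails here.
\end{itemize}

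Your fallback is the right repair. Let $u\in\widetilde H^s_{B,0}$ with $u=u\circ\sigma_e$ a.e.\ in $B$. Apply \eqref{eq:u-outside2}, substitute $y\mapsto\sigma_e(y)$, and use $\sigma_e(B)=B$ and $|\sigma_e(x)-\sigma_e(y)|=|x-y|$. This gives $u\circ\sigma_e=u$ a.e.\ in $B^c$, so $E^\pm$ are null and the dichotomy follows. This restricted version is all that Corollary~\ref{cor:pol} needs, since first nontrivial eigenfunctions belong to $\widetilde H^s_{B,0}$.
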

\begin{proof}
		The proof of \eqref{eq:polarization-norm} goes as in \cite[Lemma 2.3]{benediktbobkovdharagirg}, by using the decomposition 
		\begin{equation}\label{eq:decomp} 
		\R^{2N} \setminus (B^c)^2 = (B \times B) \cup (B^c \times B) \cup (B \times B^c)
		\end{equation}
		and showing that the integral on each subset decreases under polarization. 
		The equality cases can be characterized as in the proof of \cite[Proposition~2.1]{bobkol}. 
		The fact that $P_e u \in H^s_{B,0}$ will then follow from \eqref{eq:polarization-normweak3} and \eqref{eq:polarization-norm}. 
		We provide details for clarity.	
		
		Let us denote $v = P_e u$. 
		We have
		$$
		\iint_{\R^{2N} \setminus (B^c)^2} \frac{|v(x)-v(y)|^2}{|x-y|^{N+2s}}\,dx\,dy
		=
		\iint_{\R^{2N}} \frac{|v(x)-v(y)|^2}{|x-y|^{N+2s}} \,\chi(x,y)\,dx\,dy,
		$$
		where $\chi$ is the indicator function of the set $\R^{2N} \setminus (B^c)^2$. 
		The same equality holds for $u$. 
		Decomposing
		$$
	\mathbb{R}^{2N} = (\Sigma_e^+ \times \Sigma_e^+) \cup (\Sigma_e^+ \times \Sigma_e^-)  \cup (\Sigma_e^- \times \Sigma_e^+) \cup (\Sigma_e^- \times \Sigma_e^-)
		$$
		and noting that the set $\R^{2N} \setminus (B^c)^2$ is symmetric with respect to the reflection $\sigma_e$, 		we have
		\begin{gather} 			\label{eq:prop:decomp1}		
			\iint_{\R^{2N} \setminus (B^c)^2} \frac{|v(x)-v(y)|^2}{|x-y|^{N+2s}}
			\,dx\,dy
			 = 
			\iint_{(\Sigma_e^+)^2}
			\Bigg[ \frac{|v(x)-v(y)|^2}{|x-y|^{N+sp}} 
			+ \frac{|v(\sigma_e(x))-v(y)|^2}{|\sigma_e(x)-y|^{N+sp}}				
			\\ +
			\frac{|v(x)-v(\sigma_e(y))|^2}{|x-\sigma_e(y)|^{N+sp}}+ \frac{|v(\sigma_e(x))-v(\sigma_e(y))|^2}{|\sigma_e(x)-\sigma_e(y)|^{N+sp}}
			\Bigg] \chi(x,y)\, dx\,dy,
		\end{gather}
		and an analogous representation holds for $u$.
		Thus, in order to prove \eqref{eq:polarization-norm}, it is sufficient to establish the inequality
		\begin{multline}
			\frac{|v(x)-v(y)|^2}{|x-y|^{N+sp}} + \frac{|v(\sigma_e(x))-v(y)|^2}{|\sigma_e(x)-y|^{N+sp}} +  \frac{|v(x)-v(\sigma_e(y))|^2}{|x-\sigma_e(y)|^{N+sp}} + \frac{|v(\sigma_e(x))-v(\sigma_e(y))|^2}{|\sigma_e(x)-\sigma_e(y)|^{N+sp}}
			\\
			\label{eq:8}
			\leqslant
			\frac{|u(x)-u(y)|^2}{|x-y|^{N+sp}} + \frac{|u(\sigma_e(x))-u(y)|^2}{|\sigma_e(x)-y|^{N+sp}} +  \frac{|u(x)-u(\sigma_e(y))|^2}{|x-\sigma_e(y)|^{N+sp}} + \frac{|u(\sigma_e(x))-u(\sigma_e(y))|^2}{|\sigma_e(x)-\sigma_e(y)|^{N+sp}}
		\end{multline}
		for almost all points $(x, y) \in (\Sigma_e^+)^2 \cap (\R^{2N} \setminus (B^c)^2)$, and characterize equality cases. 
		
		It is not hard to see that for every $x, y \in \Sigma_e^+$ we have
		\begin{equation}\label{eq:7}
			\frac{1}{|x-y|^{N+sp}} 
			= 
			\frac{1}{|\sigma_e(x)-\sigma_e(y)|^{N+sp}}
			\geq 
			\frac{1}{|\sigma_e(x)-y|^{N+sp}} 
			= 
			\frac{1}{|x-\sigma_e(y)|^{N+sp}},
		\end{equation}
		where the equality occurs if and only if $x \in H_e$ or $y \in H_e$. 
		As a consequence of \eqref{eq:7}, we also have
		\begin{equation}\label{eq:7x}
			\frac{1}{|x-y|^{N+sp}} 
			-
			\frac{1}{|x-\sigma_e(y)|^{N+sp}}
			= 
			\frac{1}{|\sigma_e(x)-\sigma_e(y)|^{N+sp}} 
			-
			\frac{1}{|\sigma_e(x)-y|^{N+sp}} \geq 0,
		\end{equation}
	with the same characterization of equality cases. 
		
		Let us denote
		\begin{equation}\label{eq:sigma-tilde}
		\widetilde{\Sigma} = (\Sigma_e^+)^2 \cap (\R^{2N} \setminus (B^c)^2)
		\end{equation}
		and decompose $\widetilde{\Sigma}$ as the union of the following (nondisjoint) subsets:
		\begin{align}
			A_{++} &= \{(x,y) \in \widetilde{\Sigma}:~ 
			u(\sigma_e(x)) \geqslant u(x) 
			~\text{and}~
			u(\sigma_e(y)) \geqslant u(y) 
			\},\\
			A_{--} &= \{(x,y) \in \widetilde{\Sigma}:~ 
			u(\sigma_e(x)) \leqslant u(x) 
			~\text{and}~
			u(\sigma_e(y)) \leqslant u(y) 
			\},\\
			\label{eq:Apm}
			A_{+-} &= \{(x,y) \in \widetilde{\Sigma}:~ 
			u(\sigma_e(x)) > u(x) 
			~\text{and}~
			u(\sigma_e(y)) < u(y) 
			\},\\
			\label{eq:Amp}
			A_{-+} &= \{(x,y) \in \widetilde{\Sigma}:~ 
			u(\sigma_e(x)) < u(x) 
			~\text{and}~
			u(\sigma_e(y)) > u(y) 
			\}.
		\end{align}
		We investigate the inequality \eqref{eq:8} in each subset separately.
		
		$\bullet$
		Take any $(x,y) \in A_{++}$.
		By the definition of polarization, we have 
		\begin{equation}\label{eq:v=u1}
			v(x) = u(x), \quad v(\sigma_e(x)) = u(\sigma_e(x)),
			\quad \text{and} \quad 
			v(y) = u(y), \quad v(\sigma_e(y)) = u(\sigma_e(y)),
		\end{equation}
		and hence the inequality \eqref{eq:8} becomes equality for $(x,y) \in A_{++}$.
		
		$\bullet$
		Take any $(x,y) \in A_{--}$. 
		By the definition of polarization, we have
		\begin{equation}\label{eq:v=u2}
			v(x) = u(\sigma_e(x)), \quad v(\sigma_e(x)) = u(x),
			\quad \text{and} \quad 
			v(y) = u(\sigma_e(y)), \quad v(\sigma_e(y)) = u(y).
		\end{equation}
		Using the equalities from \eqref{eq:7}, we rewrite the left-hand side of \eqref{eq:8} as 
		\begin{equation}\label{lem:prof:x}
			\frac{|u(\sigma_e(x))-u(\sigma_e(y))|^2}{|\sigma_e(x)-\sigma_e(y)|^{N+sp}} +  \frac{|u(x)-u(\sigma_e(y))|^2}{|x-\sigma_e(y)|^{N+sp}} + 
			\frac{|u(\sigma_e(x))-u(y)|^2}{|\sigma_e(x)-y|^{N+sp}} +
			\frac{|u(x)-u(y)|^2}{|x-y|^{N+sp}}.
		\end{equation}
		This expression coincides with the right-hand side of \eqref{eq:8}. That is, \eqref{eq:8} holds as equality for $(x,y) \in A_{--}$.
		
		$\bullet$
		Take any $(x,y) \in A_{+-}$.
		By the definition of polarization, we have
		$$
		v(x) = u(x), \quad v(\sigma_e(x)) = u(\sigma_e(x)),
		\quad \text{and} \quad 
		v(y) = u(\sigma_e(y)), \quad v(\sigma_e(y)) = u(y),
		$$
		and hence we rewrite \eqref{eq:8} as
		\begin{multline}
			\frac{|u(x)-u(\sigma_e(y))|^2}{|x-y|^{N+sp}} + \frac{|u(\sigma_e(x))-u(\sigma_e(y))|^2}{|\sigma_e(x)-y|^{N+sp}} +  \frac{|u(x)-u(y)|^2}{|x-\sigma_e(y)|^{N+sp}} + \frac{|u(\sigma_e(x))-u(y)|^2}{|\sigma_e(x)-\sigma_e(y)|^{N+sp}}\\
			\label{eq:FFFF}
			\leqslant
			\frac{|u(x)-u(y)|^2}{|x-y|^{N+sp}} + \frac{|u(\sigma_e(x))-u(y)|^2}{|\sigma_e(x)-y|^{N+sp}} +  \frac{|u(x)-u(\sigma_e(y))|^2}{|x-\sigma_e(y)|^{N+sp}} + \frac{|u(\sigma_e(x))-u(\sigma_e(y))|^2}{|\sigma_e(x)-\sigma_e(y)|^{N+sp}}.
		\end{multline}
		Rearranging, we have
		\begin{align}
			&|u(x)-u(y)|^2
			\left(
			\frac{1}{|x-y|^{N+sp}}
			-
			\frac{1}{|{x}-\sigma_e(y)|^{N+sp}}
			\right)
			\\
			&-
			|u(\sigma_e(x))-u(y)|^2
			\left(
			\frac{1}{|\sigma_e(x)-\sigma_e(y)|^{N+sp}}
			-
			\frac{1}{|\sigma_e(x)-y|^{N+sp}}
			\right)
			\\
			&-
			|u(x)-u(\sigma_e(y))|^2
			\left(
			\frac{1}{|x-y|^{N+sp}}
			-
			\frac{1}{|{x}-\sigma_e(y)|^{N+sp}}
			\right)
			\\
			&+
			|u(\sigma_e(x))-u(\sigma_e(y))|^2
			\left(
			\frac{1}{|\sigma_e(x)-\sigma_e(y)|^{N+sp}}
			-
			\frac{1}{|\sigma_e(x)-y|^{N+sp}}
			\right) \geqslant 0.
			\label{eq:FFFF2}
		\end{align}
		Applying the equality in \eqref{eq:7x}, we rewrite \eqref{eq:FFFF2} as
		\begin{multline}
			\left[
			|u(x)-u(y)|^2 - |u(\sigma_e(x))-u(y)|^2 - |u(x)-u(\sigma_e(y))|^2 + |u(\sigma_e(x))-u(\sigma_e(y))|^2
			\right] \\
			\label{four_point_formula0}
			\times
			\left(
			\frac{1}{|x-y|^{N+sp}}
			-
			\frac{1}{|{x}-\sigma_e(y)|^{N+sp}}
			\right)
			\geqslant 0.
		\end{multline}
		Thanks to the inequality in \eqref{eq:7x} and the characterization of equality cases therein, we conclude that \eqref{four_point_formula0} is equivalent to the four-point inequality
		\begin{equation}\label{four_point_formula}
			F(x,y)
			:=
			|u(x)-u(y)|^2 - |u(\sigma_e(x))-u(y)|^2 - |u(x)-u(\sigma_e(y))|^2 + |u(\sigma_e(x))-u(\sigma_e(y))|^2
			\geqslant 0.
		\end{equation}
		Opening the brackets and rearranging, we observe that
		$$
		F(x,y) = 
		2 (u(\sigma_e(x)) - u(x)) (u(y) - u(\sigma_e(y)).
		$$
		Recalling that $(x,y) \in A_{+-}$, we indeed get $F(x,y)>0$. 
		This means that the inequality \eqref{eq:8} is strict in $A_{+-}$. 
		Arguing in much the same way, we obtain the strict inequality
		\eqref{eq:8} in $A_{-+}$. 

		Combining all four cases, we conclude that \eqref{eq:8} is satisfied in $\widetilde{\Sigma}$, which proves \eqref{eq:polarization-norm}.

		Let us now suppose that equality takes place in \eqref{eq:polarization-norm}. By inspection of the equality case for $A_{+-}$ and $A_{-+}$, it follows that 
		$|A_{+-}|=0$ and $|A_{-+}|=0$. 
		Therefore, we have the following cases:
		\begin{itemize}
			\item $u(\sigma_e(x))=u(x)$ for a.e.\ $x \in \mathbb{R}^N$. This implies $P_e u = u  = u \circ \sigma_e$.
			\item The set $\{x \in \Sigma_e^+\,|\,u(\sigma_e(x))>u(x)\}$ is not negligible. Since $|A_{+-}|=0$, it follows that $u(\sigma_e(x)) \geq u(x)$ for a.e.\ $x \in \Sigma_e^+$, which implies $P_e u = u$.
			\item The set $\{x \in \Sigma_e^+\,|\,u(\sigma_e(x))<u(x)\}$ is not negligible. Since $|A_{-+}|=0$, it follows that $u(\sigma_e(x)) \leq u(x)$ for a.e.\ $x \in \Sigma_e^+$, which implies $P_e u = u \circ \sigma_e$.
		\end{itemize}
		Thus, we always have either $P_e u = u$ or $P_e u = u \circ \sigma_e$. 
\end{proof}

\begin{cor}\label{cor:pol}
	Let $u$ be a first nontrivial eigenfunction. Then, for every $e \in \mathbb{S}^{N-1}$, either $P_e u = u$ or $P_e u = u \circ \sigma_e$ in $\mathbb{R}^N$, and hence $u$ is foliated Schwarz symmetric with respect to some $p \in \mathbb{S}^{N-1}$.
\end{cor}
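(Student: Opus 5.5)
The plan has two parts. First we show that $P_e u$ is again a first nontrivial eigenfunction, so that the equality case of Proposition~\ref{prop:polarization} applies. Then we deduce foliated Schwarz symmetry from the resulting two-alternative property.

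\textbf{Step 1: $P_e u$ is a minimizer.} Let $u$ be an eigenfunction associated to $\mu_1^{(s)}$, normalized so that $\|u\|_{L^2(B)}=1$; recall that $\int_B u\,dx=0$ and $u\in C(\R^N)$. Fix $e\in\mathbb{S}^{N-1}$.
\begin{itemize}
\item By \eqref{eq:polarization-normweak3} and \eqref{eq:polarization-normweak4}, $\|P_e u\|_{L^2(B)}=1$ and $\int_B P_e u\,dx=0$. The latter needs $u\in L^1(B)$, which holds since $u$ is bounded; if necessary, apply \eqref{eq:polarization-normweak2} to $u\chi_B$, using that $B$ is symmetric with respect to $H_e$.
\item By Proposition~\ref{prop:polarization}, $P_e u\in H^s_{B,0}$ and $[P_e u]_{H^s_{B,0}}\le [u]_{H^s_{B,0}}$.
\end{itemize}
Hence $P_e u$ is admissible in \eqref{eq:mu1}, and its Rayleigh quotient is at most $\mu_1^{(s)}$. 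Since $u$ attains the infimum, equality must hold in \eqref{eq:polarization-norm}. The equality characterization in Proposition~\ref{prop:polarization} then gives $P_e u=u$ or $P_e u=u\circ\sigma_e$ in $\R^N$. Since $u$ is continuous, these identities hold pointwise, not just almost everywhere.

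\textbf{Step 2: foliated Schwarz symmetry.} This is the standard characterization used by Brock and by Bartsch--Weth--Willem. A continuous function $u$ with the following property is foliated Schwarz symmetric with respect to some $p\in\mathbb{S}^{N-1}$: for every $e$, either $P_e u=u$ or $P_e u=u\circ\sigma_e$. Because the domain here is the whole of $\R^N$ (or the ball $B$, which is invariant), the usual proof applies sphere by sphere $\{|x|=r\}$.

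We would either cite this result, e.g.\ \cite{brocksol}, or sketch it as follows.
\begin{itemize}
\item Note that $P_e u=u\circ\sigma_e$ is the same as $P_{-e}u=u$, up to the sign conventions of the definition. So for every $e$ we have $P_e u=u$ or $P_{-e}u=u$.
\item Choose $p$ maximizing $\int_{\mathbb{S}^{N-1}}u(r\theta)\,\theta\,d\theta$ in direction, or equivalently use a maximum point of $u$ on a suitable sphere.
\item Show that for every $e$ with $e\cdot p<0$ one has $P_e u=u$. Here the orientation of polarization matters: with the definition given, $P_e$ puts the smaller values on $\Sigma_e^+$.
\item Invariance under all such polarizations implies both axial symmetry about $\R p$ and monotonicity in the polar angle.
\end{itemize}

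\textbf{Main obstacle.} The subtle point is choosing $p$ uniformly in $r$. One handles it via the set $\{e: P_e u=u\}$: it is closed by continuity, its complement is the set where $P_{-e}u=u$ holds strictly, and a connectedness argument on the sphere yields a common axis.

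Fortunately this is exactly the content of the cited literature, so the corollary reduces to Step 1 plus that citation.
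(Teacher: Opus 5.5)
Your proposal is correct and follows the paper's proof. Step 1 is exactly the paper's argument: the normalization and zero mean are preserved by \eqref{eq:polarization-normweak3}--\eqref{eq:polarization-normweak4}, the seminorm decreases by Proposition~\ref{prop:polarization}, minimality forces equality, and the equality characterization gives the dichotomy. For Step 2 the paper likewise only cites a known result, namely the Smets--Willem criterion \cite[Proposition~3.2]{SW}, applied to the continuous function $u$ in every ball centered at the origin. That, rather than \cite{brocksol}, is the right reference for the implication ``dichotomy for every $e$ $\Rightarrow$ foliated Schwarz symmetry'', so your loose sketch of how to choose $p$ is not needed.
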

\begin{proof}
	Let $\|u\|_{L^2(B)}=1$, and let $v \in H^s_{B,0}$ be defined as $v:=P_e u$ for some fixed $e \in \mathbb{S}^{N-1}$. 
	In view of \eqref{eq:polarization-normweak3} and \eqref{eq:polarization-normweak4}, we have $\|v\|_{L^2(B)}=1$ and $\int_{B} v \,dx = 0$, respectively. 
	Using Proposition~\ref{prop:polarization} and the variational characterization \eqref{eq:mu1} of $\mu_1^{(s)}$, we conclude that $v$ is a minimizer of $\mu_1^{(s)}$, and hence $v$ is also a first nontrivial eigenfunction. 
	Proposition~\ref{prop:polarization} further yields $v = u$ or $v=u \circ \sigma_e$. 
	Recalling from Section~\ref{sec:prelim} that $u \in C(\mathbb{R}^N)$, and applying \cite[Proposition~3.2]{SW} to $u$ in any ball centered at the origin, we deduce that $u$ is foliated Schwarz symmetric with respect to some $p \in \mathbb{S}^{N-1}$. 
\end{proof}

\begin{prop}\label{prop:twonodaldomains}
	Let $u$ be a first nontrivial eigenfunction which is nonradial. 
	Let $p \in \mathbb{S}^{N-1}$ be such that $u$ is foliated Schwarz symmetric with respect to $p$.
	Then $u - u \circ \sigma_p$ is a first nontrivial eigenfunction which is antisymmetric with respect to $H_p$, foliated Schwarz symmetric with respect to $p$, and has exactly two nodal domains in $B$.
\end{prop}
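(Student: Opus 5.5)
The plan is to first show that $w := u - u\circ\sigma_p$ is a first nontrivial eigenfunction, then read off its symmetry, and finally count its nodal domains using Corollary~\ref{cor:pol}.

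\textbf{Step 1: $w$ is a first nontrivial eigenfunction.} Since the ball $B$, the set $\R^{2N}\setminus (B^c)^2$ and the kernel $|x-y|^{-N-2s}$ are all invariant under the reflection $\sigma_p$, the weak formulation \eqref{eq:weak} is invariant under composition with $\sigma_p$. Hence $u\circ\sigma_p$ is again an eigenfunction associated to $\mu_1^{(s)}$. By linearity, $w = u-u\circ\sigma_p$ lies in the eigenspace.

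\textbf{Step 2: $w\not\equiv 0$.} Suppose instead $w \equiv 0$, i.e.\ $u$ is symmetric with respect to $H_p$. By foliated Schwarz symmetry, $\hat u(r,\theta)=\hat u(r,\pi-\theta)$ with $\hat u(r,\cdot)$ nonincreasing on $[0,\pi]$. This forces $\hat u(r,\cdot)$ to be constant in $\theta$, so $u$ is radial, which contradicts the hypothesis.

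\textbf{Step 3: symmetry of $w$.} Antisymmetry of $w$ with respect to $H_p$ is immediate. Axial symmetry about $\R p$ is inherited from $u$, since $\sigma_p$ commutes with rotations fixing $p$. Writing $\hat w(r,\theta)=\hat u(r,\theta)-\hat u(r,\pi-\theta)$, this function is nonincreasing in $\theta$, so $w$ is foliated Schwarz symmetric with respect to $p$. It follows that $w\ge 0$ on $\Sigma_p^+$ and $w\le 0$ on $\Sigma_p^-$.

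\textbf{Step 4: exactly two nodal domains.} This is the main obstacle: showing that $\{w>0\}\cap B$ is connected and nonempty, and similarly for $\{w<0\}$. Both sets are nonempty because $w\not\equiv0$ and $w$ is antisymmetric. For connectedness, monotonicity in $\theta$ gives that each spherical slice $\{w>0\}\cap\partial B_r$ is a geodesic cap around $p$, so it is connected when nonempty. The remaining difficulty is to exclude radii $r$ at which the slice is empty, i.e.\ $w(r\,\cdot)\equiv 0$ on a sphere, separating the positive set into several annular pieces.

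To handle this, I would apply Corollary~\ref{cor:pol} to the eigenfunction $w$ itself. Polarization with respect to any $e$ must return $w$ or $w\circ\sigma_e$. Suppose $\{w>0\}\cap B$ had two components $U_1$ and $U_2$, separated by a sphere $\partial B_{r_0}$ on which $w\equiv 0$. Then I would compare $w$ with $w_1 := w\chi_{U_1}-(w\chi_{U_1})\circ\sigma_p$. If needed, I would combine $w_1$ with the remaining part to restore the zero mean. The aim is to show that this modified function has Rayleigh quotient at most $\mu_1^{(s)}$, because the nonlocal cross terms between regions of the same sign are strictly favourable. This would make it an eigenfunction vanishing on an open set, which contradicts the unique continuation/positivity-type properties of eigenfunctions of the fractional Laplacian.

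The first thing I would try is the alternative route of polarizing $w$ with respect to a hyperplane $H_e$ with $e$ close to $p$. If a zero sphere existed, I expect $P_e w$ to be neither $w$ nor $w\circ\sigma_e$, contradicting Corollary~\ref{cor:pol}.
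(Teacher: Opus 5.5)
Your Steps 1--3 are correct and match the paper. Your reduction in Step 4 is also right: by monotonicity in $\theta$, more than two nodal domains can only come from a sphere $\{|x|=r_0\}$, $0<r_0<R$, on which $w\equiv 0$. The gap is that neither mechanism you propose excludes such a sphere.

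The polarization route cannot work, because the dichotomy of Corollary~\ref{cor:pol} holds for \emph{every} function that is foliated Schwarz symmetric with respect to $p$. Indeed $P_e w=w$ if $e\cdot p\le 0$, and $P_e w=(P_{-e}w)\circ\sigma_e=w\circ\sigma_e$ if $e\cdot p\ge 0$. For example, $g(|x|)\,x\cdot p$ with $g\ge 0$ and $g(r_0)=0$ has a zero sphere and passes every such test, so applying Corollary~\ref{cor:pol} to $w$ gives nothing beyond Step~3.

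The truncation route fails because the nonlocal interaction has the opposite sign to the one you claim. Write $w=w_1+w_2$ with disjointly supported antisymmetric pieces, both nonnegative on $\Sigma_p^+$; your $w_1$ is then automatically mean-zero. Let $\mathcal{E}$ be the bilinear form in \eqref{eq:weak} and $K(x,y)=|x-y|^{-N-2s}$. Disjointness gives $\mathcal{E}(w_1,w_2)=-c_{N,s}\iint_{\R^{2N}\setminus(B^c)^2}w_1(x)w_2(y)K(x,y)\,dx\,dy$. Since $\R^{2N}\setminus(B^c)^2$ is invariant under $(x,y)\mapsto(\sigma_p x,y)$ and $(x,y)\mapsto(x,\sigma_p y)$, folding onto $(\Sigma_p^+)^2$ turns this integral into $2\iint w_1(x)w_2(y)\,[K(x,y)-K(x,\sigma_p y)]\,dx\,dy\ge 0$. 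Testing the equation with $w_1$ then yields $\mathcal{E}(w_1,w_1)=\mu_1^{(s)}\|w_1\|_{L^2(B)}^2-\mathcal{E}(w_2,w_1)\ge\mu_1^{(s)}\|w_1\|_{L^2(B)}^2$. So truncation \emph{raises} the Rayleigh quotient, and no contradiction arises. The favourable cross term you have in mind occurs between pieces of opposite sign, as for $u^+$ and $u^-$; here the pieces have the same sign on each half-space.

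The missing idea, which is what the paper uses, is the strong maximum principle for antisymmetric supersolutions \cite[Proposition~3.6]{jarohswethantisym}. Its hypotheses hold: $w$ is antisymmetric with respect to $H_p$, it is nonnegative on all of $\Sigma_p^+$ in $\R^N$ (the foliated Schwarz symmetry from Corollary~\ref{cor:pol} holds in the whole space), and it weakly solves $(-\Delta)^s w=\mu_1^{(s)}w$ in $B$. Hence $w>0$ on compact subsets of every open $U\Subset B\cap\{x\cdot p>0\}$, i.e.\ in the whole open half-ball. The two nodal domains are then simply the two open half-balls, and no slicing is needed.

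The mechanism is visible pointwise. If $x_0\in B$, $x_0\cdot p>0$ and $w(x_0)=0$, the same folding gives $(-\Delta)^s w(x_0)=-c_{N,s}\int_{\Sigma_p^+}w(y)\,[K(x_0,y)-K(x_0,\sigma_p y)]\,dy<0$. This contradicts $(-\Delta)^s w(x_0)=\mu_1^{(s)}w(x_0)=0$. So the same-sign interaction that defeats your truncation argument is exactly what makes the maximum principle work.
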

\begin{proof}
	By Corollary \ref{cor:pol}, there exists $p \in \mathbb{S}^{N-1}$ such that $u$ is foliated Schwarz symmetric with respect to $p$. Since $u$ is nonradial, we deduce that $u \not\equiv u \circ \sigma_p$. Set $v := u - u \circ \sigma_p$, so that $v \not \equiv 0$. 
	Therefore, $v$ is a first nontrivial eigenfunction which is antisymmetric with respect to $H_p$.
	By construction, $v$ is foliated Schwarz symmetric with respect to $p$,  $v \geq 0$ in $\Sigma_p^+$ and $v \leq 0$ in $\Sigma_p^-$.
	It is not hard to see that $v$, being a solution of \eqref{eq:weak}, satisfies the assumptions of the strong maximum principle for antisymmetric supersolutions \cite[Proposition~3.6]{jarohswethantisym} in any open subset $U \Subset B \cap \Sigma_p^+$, which implies that $v>0$ in any compact subset $K$ of $U$. 
	Consequently, $v$ has exactly two nodal domains in $B$. 
\end{proof}

\begin{prop}\label{prop:unique-antisym}
	For any $e \in \mathbb{S}^{N-1}$, there exists at most one (modulo scaling) first nontrivial eigenfunction which is antisymmetric with respect to $H_e$. 
\end{prop}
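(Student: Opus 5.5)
Suppose $u_1,u_2$ are two first nontrivial eigenfunctions, both antisymmetric with respect to $H_e$, and assume by contradiction that they are linearly independent. The plan is to construct a linear combination that is still an antisymmetric first eigenfunction but changes sign inside the half-ball $B\cap\{x\cdot e>0\}$. Such a function would have more than two nodal domains in $B$, which Proposition~\ref{prop:twonodaldomains} forbids.

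First, every nonzero $w=\alpha u_1+\beta u_2$ is a first nontrivial eigenfunction, since the eigenspace is linear, and it is antisymmetric with respect to $H_e$. An antisymmetric function is never radial unless it vanishes identically, so $w$ is nonradial. Corollary~\ref{cor:pol} then gives some $p\in\mathbb{S}^{N-1}$ with respect to which $w$ is foliated Schwarz symmetric. By Proposition~\ref{prop:twonodaldomains}, $w-w\circ\sigma_p$ is a first eigenfunction that is strictly positive in $B\cap\{x\cdot p>0\}$ and strictly negative in the opposite half-ball.

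I would next show that $p=\pm e$, so that $w$ itself does not change sign in $B\cap\{x\cdot e>0\}$. Foliated Schwarz symmetry with respect to $p$, combined with antisymmetry $w\circ\sigma_e=-w$, means $w$ is also foliated Schwarz symmetric with respect to $\sigma_e(p)$ after a sign change. More precisely, $-w$ is foliated Schwarz symmetric with respect to $\sigma_e(p)$. Hence the maxima of $w$ on each sphere $\{|x|=r\}$ occur at $rp$, and the minima occur both at $-rp$ and at $r\sigma_e(p)$.

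Now suppose $p\neq\pm e$. If $p\in H_e$, then $w(rp)=-w(rp)$, so $w$ vanishes at its maximum on every sphere. Then $w\le 0$ everywhere, and by zero mean together with continuity we get $w\equiv0$, a contradiction. Otherwise, the minimum point $r\sigma_e(p)$ differs from the antipode $-rp$. Since $w$ depends only on the angle to $p$ and is nonincreasing in that angle, $w$ must be constant, equal to its minimum, on the angular range between the angle to $\sigma_e(p)$ and $\pi$. I would then use the strong maximum principle argument of Proposition~\ref{prop:twonodaldomains}, applied to $w-w\circ\sigma_p$, to rule out this flatness.

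A cleaner route is to note that $w-w\circ\sigma_p$ and $w$ are both antisymmetric foliated eigenfunctions, so it suffices to show that $w\ge0$ on $\{x\cdot e\ge0\}$ up to sign. I expect this identification $p=\pm e$ to be the main technical point. Once it holds, $w\ge0$ (or $w\le0$) on $B\cap\{x\cdot e>0\}$, and the strong maximum principle of \cite[Proposition~3.6]{jarohswethantisym}, used as in Proposition~\ref{prop:twonodaldomains}, upgrades this to strict sign on that open half-ball.

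To conclude, fix a point $x_0\in B$ with $x_0\cdot e>0$. Both $u_1(x_0)$ and $u_2(x_0)$ are nonzero, and we may choose $w=u_2(x_0)u_1-u_1(x_0)u_2$. This $w$ is nonzero by linear independence, yet it vanishes at $x_0$. That contradicts the strict sign of $w$ on the half-ball, so $u_1$ and $u_2$ are proportional.
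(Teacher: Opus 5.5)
Your endgame is sound and differs from the paper's. Once every nonzero antisymmetric first eigenfunction is known to have a strict sign in $B\cap\{x\cdot e>0\}$, the combination $u_2(x_0)u_1-u_1(x_0)u_2$, which vanishes at $x_0$, gives the contradiction. The paper instead normalizes $u,v\ge0$ on $\Sigma_e^+$ in $L^2(B)$, applies the strong maximum principle to $u-v$, and gets $u>v\ge0$, contradicting $\|u\|_{L^2(B)}=\|v\|_{L^2(B)}$.

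However, that sign property is the whole content of the proposition, and you do not prove it. Your route through $p=\pm e$ stops at ``I would then use the strong maximum principle \dots\ to rule out this flatness''. The ``cleaner route'' merely restates what is to be shown. Moreover, the flatness you isolate cannot be excluded this way. On the caps where $\hat w(r,\cdot)$ is constant, $w-w\circ\sigma_p=\hat w(r,\theta)-\hat w(r,\pi-\theta)$ equals $2\max_{\partial B_r}w\ge 0$. This is fully compatible with strict positivity of $w-w\circ\sigma_p$ in $B\cap\{x\cdot p>0\}$. Also, Proposition~\ref{prop:twonodaldomains} does not forbid first eigenfunctions with more than two nodal domains; it only concerns $u-u\circ\sigma_p$.

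The missing idea is that you discarded most of Corollary~\ref{cor:pol}. It gives the dichotomy $P_e w\in\{w,\,w\circ\sigma_e\}$ for \emph{every} direction, in particular for the direction $e$ of antisymmetry itself. Since $w\circ\sigma_e=-w$, on $\Sigma_e^+$ one has $P_e w=\min\{w,-w\}=-|w|$. So $P_e w=w$ means $w\le 0$ on $\Sigma_e^+$, while $P_e w=w\circ\sigma_e=-w$ means $w\ge0$ there. This is exactly the first step of the paper's proof. The strong maximum principle \cite[Proposition~3.6]{jarohswethantisym} then upgrades it to a strict sign in the open half-ball, after which your point-evaluation argument closes the proof.

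If you prefer to stay within your foliated-symmetry framework, you can also get the sign directly. Assume $p\cdot e\ge0$ and set $\beta=\angle(p,e)$. The zeros of $w$ on $H_e\cap\partial B_r$ occur at angles $\pi/2\pm\beta$ from $p$. Monotonicity then forces $\hat w(r,\theta)\ge 0$ for all $\theta\le\pi/2+\beta$. Every point of $\Sigma_e^+$ lies at angle at most $\pi/2+\beta$ from $p$, so $w\ge0$ on $\Sigma_e^+$. When $\beta>0$, it is this equatorial zero set, not the flat caps, that the maximum principle rules out.
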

\begin{proof}
	Let $e \in \mathbb{S}^{N-1}$, and let $u, v \in \widetilde{H}^s_{B,0}$ be two first nontrivial eigenfunctions which are antisymmetric with respect to $H_e$ and normalized as \[\|u\|_{L^2(B)}=\|v\|_{L^2(B)}=1.\]
	Corollary~\ref{cor:pol} implies that either $P_e u = u$ or $P_e u  = u\circ \sigma_e$, and hence, without loss of generality, $u \geq 0$ in $\Sigma_e^+$. 
	In the same way, we can also assume that $v \geq 0$ in $\Sigma_e^+$. 
	Suppose that $u \not \equiv v$ and define $w:=u-v$. 
	In particular, $w$ is also a first nontrivial eigenfunction which is antisymmetric with respect to $H_e$. 
	Again in view of Corollary~\ref{cor:pol} we have either $P_e w = w$ or $P_e w  = w\circ \sigma_e$, and hence, without loss of generality, $w \geq 0$ in $\Sigma_e^+$. 
	As in the proof of Proposition~\ref{prop:twonodaldomains}, the strong maximum principle  \cite[Proposition~3.6]{jarohswethantisym} gives either $w=0$ or $w>0$ in $B \cap \Sigma_e^+$. 
	If $w>0$ in $B \cap \Sigma_e^+$, then $u>v \geq 0$ in $B \cap \Sigma_e^+$, which leads to the following contradiction:
	\[
	1
	=
	\|u\|_{L^2(B)}^2 
	=
	2 \|u\|_{L^2(B\cap \Sigma_e^+)}^2
	> 
	2\|v\|_{L^2(B\cap \Sigma_e^+)}^2
	=
	\|v\|_{L^2(B)}^2
	=
	1.
	\]
	Thus, we have $w=0$ in $B \cap \Sigma_e^+$, and hence $u=v$ in $B$. 
	However, since $u, v \in \widetilde{H}^s_{B,0}$, Proposition~\ref{prop:minimalextension} yields $u \equiv v$, which contradicts our initial assumption. 
\end{proof}

\begin{prop}\label{prop:radial}
	Let $u \in H^s_{B,0}$ be a first nontrivial eigenfunction which is not antisymmetric with respect to $H_e$ for any $e \in \mathbb{S}^{N-1}$. 
	Let $p \in \mathbb{S}^{N-1}$ be such that $u$ is foliated Schwarz symmetric with respect to $p$.
	Then $u + u \circ \sigma_p$ is a first nontrivial eigenfunction which is radially symmetric. 
\end{prop}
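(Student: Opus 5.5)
Set $w := u + u\circ\sigma_p$. Since the problem on $B$ is invariant under reflections, $u\circ\sigma_p$ is again a first nontrivial eigenfunction, and so is $w$ provided $w\not\equiv 0$. The argument therefore has three steps: show $w\not\equiv 0$, show $w$ is axially symmetric about $\R p$ and even with respect to $H_p$, and then upgrade this to full radial symmetry.

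\emph{Step 1: $w \not\equiv 0$.} If $w\equiv 0$, then $u\circ\sigma_p = -u$, so $u$ is antisymmetric with respect to $H_p$. This contradicts the assumption on $u$. Hence $w\not\equiv 0$ and $w$ is a first nontrivial eigenfunction.

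\emph{Step 2: symmetries of $w$.} Because $u$ is foliated Schwarz symmetric with respect to $p$, it is axially symmetric about $\R p$. The reflection $\sigma_p$ commutes with all rotations fixing $\R p$, so $u\circ\sigma_p$, and hence $w$, is axially symmetric about $\R p$. Moreover $w\circ\sigma_p = w$ by construction. Written in polar form, $w(x)=\hat w(|x|,\theta)$ with $\hat w(r,\theta)=\hat w(r,\pi-\theta)$, where $\theta=\arccos(x/|x|\cdot p)$.

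\emph{Step 3: from symmetric to radial.} Apply Corollary~\ref{cor:pol} to $w$: it is foliated Schwarz symmetric with respect to some $q\in\mathbb{S}^{N-1}$, and for every $e$ either $P_e w = w$ or $P_e w = w\circ\sigma_e$. Suppose $w$ is not radial. I would argue that the symmetry axis is then forced. Any nonradial function that is axially symmetric about two distinct axes $\R p$ and $\R q$ is invariant under the group generated by the rotations fixing $p$ and those fixing $q$. That group acts transitively on spheres, which would make $w$ radial. Hence $\R q = \R p$, i.e.\ $q=\pm p$. On each sphere $|x|=r$, the profile $\hat w(r,\cdot)$ is then monotone in $\theta$ (nonincreasing if $q=p$, nondecreasing if $q=-p$) and symmetric under $\theta\mapsto\pi-\theta$. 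A monotone function on $[0,\pi]$ with this symmetry is constant, so $\hat w(r,\theta)$ is independent of $\theta$ and $w$ is radial. This contradicts the assumption that $w$ is not radial, so $w$ is radial.

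\emph{Main obstacle.} The delicate point is the claim that a nonradial function cannot be axially symmetric about two distinct axes. I would justify it with the transitivity argument above: the rotations fixing $p$ together with those fixing $q$ generate $SO(N)$ when $N\ge2$ and $\R q\ne\R p$. For $N=2$ the axial symmetry is only a reflection, so the generated group is dihedral and need not be all of $SO(2)$. In that case I would instead use the dichotomy $P_e w\in\{w, w\circ\sigma_e\}$ for all $e$, together with the continuity of $w$, to obtain monotonicity in the angle about both $p$ and $q$, and conclude radiality in the same way.

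Finally, because $w$ is continuous and belongs to $\widetilde{H}^s_{B,0}$, the extension formula~\eqref{eq:u-outside2} on the ball preserves rotational invariance. So once $w$ is radial in $B$, it is radial in all of $\R^N$.
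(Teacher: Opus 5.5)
Your argument follows the paper's proof. You form $u+u\circ\sigma_p$, use non-antisymmetry to see it is nonzero, and note that it is axially symmetric about $\R p$ and even across $H_p$. You then apply Corollary~\ref{cor:pol} and conclude from angular monotonicity together with the symmetry $\theta\mapsto\pi-\theta$.

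The paper simply asserts that the new axis must be collinear with $p$. Your transitivity argument justifies this for $N\ge 3$. Your $N=2$ paragraph, however, is only a plan: it is unclear where ``monotonicity in the angle about $p$'' would come from, since $w$ is only known to be foliated Schwarz symmetric about $q$. You are right that axial symmetry alone is not enough there.

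The cleanest way to close this works in every dimension and avoids identifying the axis at all. The relation $\hat w(r,\pi-\theta)=\hat w(r,\theta)$ says exactly that $w(-x)=w(x)$. If $w$ is foliated Schwarz symmetric with respect to any $q$, then on each sphere $|x|=r$ you have $\max w = w(rq) = w(-rq) = \min w$, so $w$ is radial.

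Your final remark about the extension outside $B$ is unnecessary. The symmetries supplied by Corollary~\ref{cor:pol} already hold on all of $\R^N$.
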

\begin{proof}
	By the assumption, we have $u \not\equiv -u \circ \sigma_p$.  
	Consider $v := u + u \circ \sigma_p$, so that $v \not \equiv 0$ and $v$ is a first nontrivial eigenfunction. 
	Suppose, by contradiction, that $v$ is nonradial. 
	Since $u$ and $u \circ \sigma_p$ are axially symmetric with respect to $\R p$ (see Definition~\ref{def:foliated}), so is $v$: 
	\[ 
	v(x) = \hat{v}\left(r, \theta\right),
	\quad \text{where}~
	r = |x|,
	\quad
	\theta = \arccos\left( \frac{x}{|x|} \cdot p\right),
	\]
	for some $\hat{v}:[0,+\infty) \times [0,\pi] \to \R$, and, moreover, $\hat{v}(r,\theta) = \hat{v}(r,\pi - \theta)$ by construction. 
	By Corollary~\ref{cor:pol}, $v$ is foliated Schwarz symmetric with respect to some $p' \in \mathbb{S}^{N-1}$. 
	Recalling that $v$ is nonradial, we conclude that $p'$ has to be collinear to $p$. Since $\hat{v}$ is nonincreasing with respect to $\theta$ and $\hat{v}(r,\theta) = \hat{v}(r,\pi - \theta)$, we deduce that $\hat{v}$ does not depend on $\theta$, a contradiction to the nonradiality assumption. Therefore, $v$ is radially symmetric.
\end{proof}

We now have all the necessary ingredients to prove Theorem \ref{thm:main1} on the structure of the first nontrivial eigenspace in the ball.

\begin{proof}[Proof of Theorem \ref{thm:main1}]
Denote by $E^1 \subset H^s_{B,0}$ the eigenspace associated to the first nontrivial eigenvalue $\mu_1^{(s)}$. 
Let $E_r$ be the subspace of radially symmetric eigenfunctions, and $E_a$ the subspace generated by eigenfunctions each of which is antisymmetric with respect to some hyperplane. Let $u \in E^1$. By Corollary~\ref{cor:pol}, $u$ is foliated Schwarz symmetric with respect to some $p \in \mathbb{S}^{N-1}$. Writing
\[ u = \frac{1}{2}\left(u-u\circ \sigma_p \right)+ \frac{1}{2}(u+u \circ \sigma_p),\]
and in view of Propositions~\ref{prop:twonodaldomains} and \ref{prop:radial}, we deduce that \[E^1 = E_a \oplus E_r.\] 
Since the dimension of $E^1$ is finite, the same is true for $E_a$ and $E_r$. 
In particular, there exist at most $k \in \mathbb{N}$ linearly independent radially symmetric eigenfunctions in $E^1$. 

Assume that $E_a \neq \{0\}$ and consider the subspace $E_a$ in more detail. 
Take any $u_1 \in E_a$ which is antisymmetric with respect to $H_{e'}$ for some $e' \in \mathbb{S}^{N-1}$. 
Since the problem \eqref{eq:eigenvalueproblem0} in $B$ is rotationally invariant, we can assume, without lost of generality, that $e' = (1,0,\dots,0)$. 
Denoting by $e_j$ the $j$-th unit coordinate vector, $j=1,\dots,N$, we let $u_j$ be a rotation of $u_1$ so that $u_j \in E_a$ and it is antisymmetric with respect to $H_{e_j}$. 
By construction, the functions $u_1,\dots, u_N$ are $L^2(\Omega)$-orthogonal (and hence linearly independent) to each other.

Let us now take any $e \in \mathbb{S}^{N-1}$.
Since $e = \alpha_1 e_1 + \dots +\alpha e_N$ for some $(\alpha_1,\dots,\alpha_N) \neq (0,\dots,0)$, the function $v = \alpha_1 u_1 + \dots \alpha_N u_N$ is nonzero. 
Indeed, recalling that $u_1,\dots, u_N$ are $L^2(\Omega)$-orthogonal, we get
$$
\|v\|_{L^2(B)}^2 =
\alpha_1^2 \|u_1\|_{L^2(B)}^2
+ \dots + 
\alpha_N^2 \|u_N\|_{L^2(B)}^2
=
(\alpha_1^2 + \dots + \alpha_N^2) \|u_1\|_{L^2(B)}^2 
=
\|u_1\|_{L^2(B)}^2 > 0. 
$$
Therefore, $v \in E_a$ and it is antisymmetric with respect to $H_{e}$.  
Thanks to Proposition~\ref{prop:unique-antisym}, $v$ is the unique eigenfunction in its symmetry class, up to scaling. 
Since $v$ was arbitrary, we conclude that the functions 
$\{u_1,\dots, u_N\}$ form an orthogonal basis for $E_a$. 
The remaining properties of $u_j$ are given by Proposition~\ref{prop:twonodaldomains}. 
\end{proof}	

By means of Theorems~\ref{thm:main1} and \ref{thm:stable}, we are able to prove the antisymmetry of the first nontrivial eigenfunctions in the ball, for $s$ sufficiently close to $1$.

\begin{proof}[Proof of Theorem \ref{thm:main2}] Suppose, by contradiction, that there exists a sequence $\{s_k\}_{k \in \N} \subset (0,1)$ with $s_k \to 1^-$ as $k \to +\infty$, such that, for every $k \in \N$, there exists a radially symmetric eigenfunction $u_1^{(s_k)}$, normalized in $L^2(B)$, associated to $\mu_1^{(s_k)}$. By Theorem~\ref{thm:stable}, $u_1^{(s_k)}$ converges in $L^2(\Omega)$, up to a subsequence, to an eigenfunction $u \in H^1(B)$
associated to the first nontrivial eigenvalue $\mu_1^{(1)}$ of the local Neumann Laplacian in $B$.
But then $u$ is also radially symmetric, which is impossible, see Section~\ref{sec:intro}. 
Therefore, there exists $s^* \in (0,1)$ such that, for all $s \in (s^*,1)$, the eigenfunctions associated to $\mu_1^{(s)}$ are not radially symmetric. By Theorem \ref{thm:main1}, the eigenspace associated to $\mu_1^{(s)}$ is generated by $N$ linearly independent antisymmetric eigenfunctions.
\end{proof}

\end{document}